\newtheorem{Thm}{Theorem}
\newtheorem{Def}[Thm]{Definition}
\newtheorem{Prop}[Thm]{Proposition}
\newtheorem{Lem}[Thm]{Lemma}
\newtheorem{Cor}[Thm]{Corollary}
\newtheorem{Ex}{Example}
\newtheorem{remark}{Remark}
\newcommand{\1}{\mathds{1}}
\renewcommand{\>}{\right\rangle}
\renewcommand{\~}[1]{\widetilde{#1}}
\newcommand{\8}{\infty}
\newcommand{\E}{\mathcal{E}}
\renewcommand{\H}{\mathcal{H}}
\newcommand{\im}{\mathrm{im}}
\newcommand{\Q}{\mathbb{Q}}
\newcommand{\PSL}{\textnormal{PSL}}
\newcommand{\Sum}[1]{\underset{#1}{{\sum} }}
\newcommand{\Z}{\mathbb{Z}}
\newcommand{\Oplus}[1]{\underset{#1}{\oplus}}
\title{The Mayer-Vietoris Sequence for Graphs of Groups, Property (T), and the First $\ell^2$-Betti Number\\
}
\author{Talia FERN\'OS and Alain VALETTE}
\begin{document}

\baselineskip=16pt

\maketitle

\begin{abstract} 
We explore the Mayer-Vietoris sequence developed by Chiswell for the fundamental group of a graph of groups when vertex groups satisfy some vanishing assumption on the first cohomology (e.g. property (T), or vanishing of the first $\ell^2$-Betti number). We characterize the vanishing of first reduced cohomology of unitary representations when vertex stabilizer have property (T). We find necessary and sufficient conditions for the vanishing of the first $\ell^2$-Betti number. We also study the associated Haagerup cocycle and show that it vanishes in first reduced cohomology precisely when the action is elementary. 
\end{abstract}
\vspace{1cm}


\section{Introduction}

\subsection{1-cohomology}

The first cohomology of a locally compact group $G$ with coefficients in a unitary representation $\pi:G \to \mathcal{U}(\mathcal{H})$ is an object whose study encompasses many interesting themes, such as Property (T), the Haagerup property, and other relatives, as well as the first $\ell^2$-Betti number. For a discrete group $G$ acting on a tree, Chiswell introduced a Mayer-Vietoris sequence for the cohomology of $G$, in terms of the cohomologies of the vertex groups and edge groups \cite{Chi}. The class of groups which admit such actions is quite large and includes limit groups and non-finitely generated countable groups, among others.

In this paper, we first extend Chiswell's sequence, in low degrees, to arbitrary topological groups acting on trees. We study these under the assumption that vertex stabilizers satisfy some condition on the vanishing of $H^1$ (property (T) or vanishing of the first $\ell^2$-Betti number), which leads to two major applications that we now discuss. One concerns the first $\ell^2$-Betti number $\beta^1(G)$; the other, the fact that the first cohomology of $\PSL_2\Q_p$ is non-vanishing precisely when the irreducible representation is special (which is a result of Nebbia \cite{Neb}). 

Let $G$ be a topological group acting without inversion on a tree $\cal{T}$, with quotient graph $X=:G\backslash \cal{T}$. Classical by now is the fact that $G$ admits a  graph of groups decomposition $\pi_1({\cal G}, X, T)$, where $T\subset X$ is a maximal tree and ${\cal G}$ represents the local groups (see Section \ref{Prelim} for more details on Bass-Serre theory). Throughout this paper, we will take such actions and decompositions as interchangeable. Also, in order to avoid cumbersome notation, we assume the graph of groups to be reduced (see Definition \ref{reduced} in Section \ref{redgraphs}).

\begin{Thm}\label{L2 Betti}
  Let $X$ be a graph, $T$ a maximal tree in $X$, 
  $({\cal G}, X)$ a reduced graph of discrete groups, and $G = \pi_1({\cal G}, X, T)$. Assume that $\beta^1(G_v)=0$ for every $v\in V$ and $\sum\frac{1}{|G_e|}<\8$. Then $\beta^1(G)=0 $ if and only if $G$ belongs to one of the following cases:
  
\begin{enumerate}
 \item The graph $X$ is a single vertex and $G= G_v$. 
  \item The graph $X$ is a single loop and $G=\Z\ltimes G_v$.  
 \item The graph $X$ is a single edge and there is an exact sequence $$1\to G_e\to G \to \Z/2*\Z/2\to 1.$$ 
  \item Every edge group is infinite.
\end{enumerate}
\end{Thm}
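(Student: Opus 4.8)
The plan is to convert the vanishing of $\beta^1(G)$ into an explicit numerical identity via the Mayer--Vietoris sequence, and then to read off the four cases from a sharp combinatorial inequality on $X$ combined with a connectedness argument. First I would feed the regular representation on $\ell^2(G)$ into the Mayer--Vietoris sequence developed above, which in low degrees takes the form
\[ 0 \to H^0(G,\ell^2 G) \to \bigoplus_{v\in V} H^0(G_v,\ell^2 G) \to \bigoplus_{e\in E} H^0(G_e,\ell^2 G) \to H^1(G,\ell^2 G) \to \bigoplus_{v\in V} H^1(G_v,\ell^2 G) \to \cdots . \]
Taking $\dim_{\mathcal{N}(G)}$ and using that $\dim_{\mathcal{N}(G)}H^n(G_v,\ell^2 G)=\beta^n(G_v)$ (and likewise for $G_e$ and $G$), additivity of the von Neumann dimension along the sequence together with the hypothesis $\beta^1(G_v)=0$ kills the term $\bigoplus_v H^1(G_v,\ell^2 G)$ and truncates the sequence. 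Since $\beta^0(H)=1/|H|$ for any group $H$, this produces the formula
\[ \beta^1(G) = \beta^0(G) + \sum_{e\in E}\frac{1}{|G_e|} - \sum_{v\in V}\frac{1}{|G_v|}, \]
all sums being convergent exactly because of the standing assumption $\sum_e 1/|G_e|<\8$.

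If $X$ has no edge then $X$ is a single vertex, $G=G_v$, and $\beta^1(G)=\beta^1(G_v)=0$, which is Case (1). Otherwise $X$ has an edge and, the graph of groups being reduced, $G$ is infinite, so $\beta^0(G)=0$ and the formula collapses to $\beta^1(G)=\sum_e 1/|G_e|-\sum_v 1/|G_v|$; thus $\beta^1(G)=0$ precisely when $\sum_e 1/|G_e| = \sum_v 1/|G_v|$. To analyze this I would prove the sharp inequality $\sum_e 1/|G_e|\ge \sum_v 1/|G_v|$ by distributing each vertex term over its incident half-edges: writing $d(v)$ for the degree, one has $\sum_v 1/|G_v| = \sum_{(e,v)} 1/(d(v)|G_v|)$, summed over incident pairs. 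For a non-loop edge $e=\{v,w\}$, reducedness gives $[G_v:G_e],[G_w:G_e]\ge 2$, so $1/(d(v)|G_v|)+1/(d(w)|G_w|)\le 1/(2|G_e|)+1/(2|G_e|)=1/|G_e|$; for a loop at $v$ one has $d(v)\ge 2$ and $G_e\le G_v$, so $2/(d(v)|G_v|)\le 1/|G_v|\le 1/|G_e|$. Summing over all edges yields the inequality, hence $\beta^1(G)\ge 0$, with equality exactly when every per-edge estimate is an equality.

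The equality analysis then splits according to the edge groups. An edge with infinite $G_e$ makes both sides of its estimate zero, so if every edge group is infinite then every incident (hence, by connectedness, every) vertex group is infinite and equality holds automatically --- this is Case (4). If instead some $G_e$ is finite, equality in its estimate forces, for a non-loop edge, $d(v)=d(w)=1$ and $[G_v:G_e]=[G_w:G_e]=2$, and for a loop, $d(v)=2$ and $G_e=G_v$. In either case the degree constraints together with connectedness of $X$ pin down $X$ as that single edge: a single non-loop edge with both indices $2$, giving the exact sequence $1\to G_e\to G\to \Z/2*\Z/2\to 1$ of Case (3); or a single loop with $G_e=G_v$, giving $G=\Z\ltimes G_v$ of Case (2). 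The converse direction is a direct substitution of each of the four configurations into the formula of the first step, all of which return $\beta^1(G)=0$.

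I expect the main obstacle to lie in this last equality analysis: one must keep careful track of loops versus non-loop edges, where both the reducedness condition and the contribution to the vertex degree behave differently, and then argue that the purely local degree constraints propagate through connectedness to force the global shape of $X$. A secondary technical point is justifying the dimension bookkeeping of the first step --- in particular that $\dim_{\mathcal{N}(G)}$ is additive along the (a priori only weakly exact) Mayer--Vietoris sequence and that reduced and unreduced $H^1$ have the same von Neumann dimension --- which is exactly where Lück's dimension theory is needed.
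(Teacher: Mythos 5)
Your proposal is correct, and its key step is genuinely different from the paper's. The identity you derive first is exactly the paper's Proposition \ref{L2Betti EQ}, which the paper obtains not from Chiswell's sequence but from the Cheeger--Gromov weakly exact sequence of the pair $(Y,Y')$, with $Y$ the Bass--Serre tree and $Y'$ its vertex set (Lemma \ref{computL2}); your route via Chiswell plus L\"uck's dimension theory is legitimate but needs the bookkeeping you flag, and (like the paper's co-compactness hypothesis in Proposition \ref{L2Betti EQ}) is really only clean when the quotient graph is finite, since for infinite $X$ Chiswell's sequence involves direct products rather than $\ell^2$-sums; also your sums should run over the orientation $A$ (one edge from each pair $\{e,\overline{e}\}$), not over all of $E$. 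The real divergence is in the converse direction. The paper splits on amenability of $G$: for non-amenable $G$ it combines Guichardet's theorem ($H^1=\overline{H}^1$ in the absence of almost invariant vectors) with Proposition \ref{VanishH^1}, whose parts (2) and (3) show by an explicit $\ell^2$-function argument that a finite edge group in a reduced decomposition forces $H^1(G,\ell^2(G))\neq 0$; for amenable $G$ it quotes the Pays--Valette theorem (amenable groups act elementarily on trees) and the classification lemmas of \cite{deCornulier} for reduced graphs of groups with elementary Bass--Serre action. You replace all of this by the sharp per-edge estimate
\[
\frac{1}{d(v)\,|G_v|}+\frac{1}{d(w)\,|G_w|}\;\le\;\frac{1}{|G_e|}
\qquad\Bigl(\text{resp. } \frac{2}{d(v)\,|G_v|}\le\frac{1}{|G_e|} \text{ for a loop}\Bigr),
\]
which follows from reducedness, together with an equality analysis; this is correct: termwise equality is forced because $\sum_e 1/|G_e|<\infty$, and for a finite edge group the chain of inequalities pins down degree $1$ and index $2$ at both endpoints (resp. $d(v)=2$ and $|G_v|=|G_e|$ for a loop), after which connectedness collapses $X$ to that single edge or loop. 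Your argument is thus more elementary and unified---no amenability dichotomy, no Guichardet, no appeal to elementarity of amenable actions or to the structure theory of \cite{deCornulier}---while the paper's route buys stronger by-products: non-vanishing of the \emph{unreduced} $H^1(G,\ell^2(G))$ whenever some edge group is finite, and an identification of the amenable fundamental groups as semidirect products, index-two amalgams, or ascending HNN extensions. Two details you should still write out: in case (3) the exact sequence comes from the fact that $G_e$, having index two in each vertex group, is normal in both and hence in $G=G_{e_+}*_{G_e}G_{e_-}$, with quotient $(G_{e_+}/G_e)*(G_{e_-}/G_e)\cong\Z/2*\Z/2$; and in the loop case it is the \emph{finiteness} of $G_e$ that forces both edge monomorphisms $G_e\to G_v$ to be onto, which is what yields $G\cong\Z\ltimes G_v$ rather than a properly ascending HNN extension (the latter has $G_v\cong G_e$ infinite and so lands in your Case (4), consistently with the paper).
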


For a locally compact group $G$, we denote the collection of irreducible unitary representations (up to unitary equivalence) of $G$ by $\hat G$. 
The second main result is an elementary proof of the following theorem, the hypotheses of which guarantee that there is a unique special representation of $G$ (see Section \ref{SectLargeGroups} for more details).

\begin{Thm}\label{twoorbits}\cite{Neb} Let $\mathcal{T}$ be a locally finite, bi-regular tree. Let $G$ be a closed subgroup of $Aut(\mathcal{T})$, acting transitively on $\partial\mathcal{T}$ and with two orbits on $V$. If $\pi\in\hat{G}\backslash\{\sigma\}$, where $\sigma$ is the special representation of $G$ on the first $\ell^2$-cohomology on $\mathcal{T}$, then $H^1(G,\pi)=0$; on the other hand $H^1(G,\sigma)$ is one-dimensional.
\end{Thm}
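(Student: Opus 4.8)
The plan is to read $H^1(G,\pi)$ directly off a graph-of-groups decomposition and then to isolate $\sigma$ by an explicit count of fixed vectors. First I would record the geometric input. Since $\mathcal T$ is locally finite, every vertex stabilizer is compact; since $G$ has two orbits on $V$ and is transitive on $\partial\mathcal T$, the vertex stabilizers act transitively on each sphere $S_n(v)$ (so the local action is transitive and $G_v$-invariant functions on $\mathcal T$ are radial), and the quotient $X=G\backslash\mathcal T$ is a single edge. Writing $A=G_{v_1}$, $B=G_{v_2}$ for the two vertex groups and $C=G_e$ for the edge group, Bass--Serre theory gives $G=A\ast_C B$ with $A,B$ compact and $C$ open of index $q_1+1$ in $A$ and $q_2+1$ in $B$, where $\mathcal T$ is $(q_1+1,q_2+1)$-biregular.

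Next I would feed this into the Mayer--Vietoris sequence. Because $A$ and $B$ are compact, $H^1(A,\pi)=H^1(B,\pi)=0$ for every unitary $\pi$, while $H^0(K,\pi)=\mathcal H^K$ is the space of $K$-fixed vectors. The sequence therefore collapses to
\[
0\to \mathcal H^{G}\to \mathcal H^{A}\oplus \mathcal H^{B}\xrightarrow{\;(\xi,\eta)\mapsto \xi-\eta\;}\mathcal H^{C}\to H^1(G,\pi)\to 0,
\]
using $\mathcal H^{A},\mathcal H^{B}\subseteq\mathcal H^{C}$. Hence $H^1(G,\pi)\cong \mathcal H^{C}/(\mathcal H^{A}+\mathcal H^{B})$, and for a nontrivial irreducible $\pi$ (so $\mathcal H^{G}=0$) one gets $\dim H^1(G,\pi)=\dim\mathcal H^{C}-\dim\mathcal H^{A}-\dim\mathcal H^{B}$ whenever these are finite. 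The theorem is thereby reduced to computing the triple $(\dim\mathcal H^{A},\dim\mathcal H^{B},\dim\mathcal H^{C})$ for each $\pi\in\hat G$.

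For $\pi=\sigma$ I would argue in the harmonic model directly: $\mathcal H_\sigma$ is the space of antisymmetric $\ell^2$ functions $f$ on oriented edges with zero divergence at every vertex. An $A$-fixed such $f$ is radial about $v_1$, and the divergence equation at $v_1$ reads $(q_1+1)f(e)=0$, which then propagates outward to force $f\equiv 0$; so $\mathcal H_\sigma^{A}=0$ and likewise $\mathcal H_\sigma^{B}=0$. A $C$-fixed $f$ is radial about the edge $e$ and is determined by its single value $t=f(e)$; solving the divergence equations outward yields geometric decay (by $(q_1q_2)^{-1/2}$ per sphere) that beats the exponential growth of the spheres, so the resulting flow is genuinely $\ell^2$ and nonzero. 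Thus $\dim\mathcal H_\sigma^{C}=1$ and $H^1(G,\sigma)\cong \mathcal H_\sigma^{C}$ is one-dimensional.

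It remains to show $\mathcal H^{C}=\mathcal H^{A}+\mathcal H^{B}$ for every irreducible $\pi\neq\sigma$, and this is where the real work lies. If $\mathcal H^{C}=0$ there is nothing to prove, so I may assume $\pi$ is edge-spherical. Decomposing the permutation module $\ell^2(A/C)=\C\mathbf 1\oplus\rho_{v_1}$ into constants and mean-zero functions on the neighbors of $v_1$, Frobenius reciprocity gives $\dim\mathcal H^{C}-\dim\mathcal H^{A}=\dim\operatorname{Hom}_A(\rho_{v_1},\pi|_A)$ and symmetrically at $v_2$, so that $\dim H^1(G,\pi)=\dim\operatorname{Hom}_A(\rho_{v_1},\pi|_A)+\dim\operatorname{Hom}_B(\rho_{v_2},\pi|_B)-\dim\mathcal H^{C}$. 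I expect the main obstacle to be the exact evaluation of these quantities, which I would carry out through the spherical-function recursion on the bi-regular tree: the $C$-fixed vectors are governed by a two-step recurrence in a spectral parameter, and one shows that for every admissible $\pi$ other than $\sigma$ the triple of dimensions is $(1,1,2)$, giving $\dim H^1=2-1-1=0$, whereas $\sigma$ is precisely the endpoint of the series at which the vertex-fixed (spherical) vectors fail to be $\ell^2$ and disappear while a single edge-fixed harmonic vector survives, recovering $(0,0,1)$. Concretely, the splitting $\mathcal H^{C}=\mathcal H^{A}+\mathcal H^{B}$ would be implemented by the averaging projections $E_A=\int_A\pi(a)\,da$ and $E_B=\int_B\pi(b)\,db$, whose failure to reconstruct a given $C$-fixed vector is detected by exactly one harmonic class, the class defining $\sigma$.
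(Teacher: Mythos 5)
Your reduction is the same as the paper's: run the Mayer--Vietoris sequence for $G=A\ast_C B$ with compact vertex stabilizers, so that $H^1(A,\pi)=H^1(B,\pi)=0$ and
$$H^1(G,\pi)\;\cong\;\mathcal{H}^{C}\big/\bigl(\mathcal{H}^{A}+\mathcal{H}^{B}\bigr),$$
and dispose of the cuspidal case ($\mathcal{H}^C=0$) for free. Two of your ingredients are genuinely nice and differ from the paper: the divergence-free harmonic model computation showing the triple $(0,0,1)$ for $\sigma$ (the paper instead cites Proposition III.2.3 of Fig\`a-Talamanca--Nebbia for $\dim M_\sigma^{G_e}=1$), and the observation that $\mathcal{H}^A\cap\mathcal{H}^B=\mathcal{H}^G=0$ turns a dimension count $(1,1,2)$ into surjectivity of $\iota$ (the paper instead checks explicitly that the two lines inside $\mathcal{H}^C$ are distinct).

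The gap is in the one case where the real work lies, and you flag it yourself: for irreducible $\pi\neq\sigma$ with $\mathcal{H}^C\neq 0$ you \emph{assert} the dimension count $(1,1,2)$ and defer its proof to an unexecuted ``spherical-function recursion.'' Two things are actually needed there. First, a classification input: you must know that an irreducible $\pi$ with an edge-fixed vector is either spherical ($\ell_\pi=1$) or special ($\ell_\pi=2$), and that $\sigma$ is the \emph{unique} special representation of $G$ -- otherwise a second representation with count $(0,0,k)$ would violate your claim and the theorem itself; this uniqueness is Theorem III.2.6 of \cite{FTN} and is not free. Second, even granting $\ell_\pi=1$ and $\pi$ nontrivial, the values $\dim\mathcal{H}^A=\dim\mathcal{H}^B=1$, $\dim\mathcal{H}^C=2$ must be computed, not postulated. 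The paper does this by invoking the concrete boundary realization of spherical representations: $\pi(g)\xi(\omega)=P(g,\omega)^s\xi(g^{-1}\omega)$ on functions on $\partial\mathcal{T}$, for which $\mathcal{H}^{A}$ is the constants, $\mathcal{H}^{C}$ is the functions constant on each half-boundary $\partial\mathcal{T}_a$, $\partial\mathcal{T}_b$, and $\mathcal{H}^{B}$ is the line of such functions with ratio $q^{2s}\neq 1$ across the two halves -- whence $\iota$ is onto. Your Frobenius-reciprocity identity $\dim\mathcal{H}^C-\dim\mathcal{H}^A=\dim\mathrm{Hom}_A(\rho_{v_1},\pi|_A)$ is correct but only restates the problem; nothing in the proposal evaluates it. So as written, the central case of the theorem ($\pi$ nontrivial spherical) remains unproved.
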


\subsection{Chiswell's Mayer-Vietoris Sequence}

If $G$ is a discrete group acting on a tree without inversion, and $M$ is any $G$-module,
the cohomology $H^*(G,M)$ can be computed by means of a Mayer-Vietoris sequence due to Chiswell \cite{Chi}:

\begin{equation}\label{MayViet}
0\rightarrow M^G \stackrel{\Delta}{\rightarrow} \prod_{v\in V}M^{G_v}\stackrel{\iota}{\rightarrow} \prod_{e\in A}M^{G_e}
\stackrel{\partial}{\rightarrow} H^1(G,M)\stackrel{\Delta}{\rightarrow} \prod_{v\in V}
H^1(G_v,M)\stackrel{\iota}{\rightarrow} \prod_{e\in A}H^1(G_e,M)\rightarrow...
\end{equation}
\vskip.15in

where:
\begin{itemize}
\item $V$ (resp. $E$) is the vertex set (resp. oriented edge set) of $X$, and $A$ is an orientation, i.e. a choice of 
one edge in every pair of two edges with opposite orientation in $E$;
\item $G_v$ (resp. $G_e$) is the vertex group (resp. edge group) attached with $v\in V$ (resp. $e\in E$);
\item for every subgroup $H\subset G$, the sub-module of $H$-fixed points in $M$ is $M^H$;
\end{itemize}
and the maps $\Delta,\iota,\partial$ will be described later.

For a topological group $G$, a $G$-module $M$ is {\it unitary} if $M$ is a Hilbert space on which 
$G$ acts through a strongly continuous unitary representation. In that case, we also consider the {\it reduced} cohomology $\overline{H}^*(G,M)$, i.e.
the quotient of cocycles by the closure of coboundaries, where the closure is taken in the topology of uniform convergence on compact subsets. 

The Delorme-Guichardet Theorem (see \cite{Guic}, or section 2.12 in \cite{BHV}) says that, for a locally compact, second-countable group $G$, Kazhdan's Property (T) is equivalent to $H^1(G,M)
=0$ for every unitary $G$-module $M$. Since then, Shalom \cite{Shalom1} proved that, for $G$ compactly generated, Property (T) is equivalent
to the vanishing of $\overline{H}^1(G,V)$, for every unitary $G$-module $M$. (The relative analogue of Shalom's theorem fails in general \cite{FernosValette}.)

A locally compact group $G$ has Serre's Property (FA) if every continuous, isometric action of $G$ on a tree preserves a vertex or an edge. It is a result by
Watatani \cite{Wat} that Property (T) implies Property (FA) (see also Lemma \ref{watatani} below).

As a consequence, the fundamental group $G$ of a graph of groups, provided it does not coincide with some vertex group, does {\it not} have
Property (T), as $G$ acts without fixed point on the universal cover of the graph of groups (see \cite{Serre}, section 5.4). The Mayer-Vietoris
sequence (\ref{MayViet}) allows in principle to characterize the unitary $G$-modules $M$ for which $H^1(G,M)\neq 0$. One question we
address in this paper is to characterize the unitary $G$-modules $M$ such that $\overline{H}^1(G,M)\neq 0$. It is possible to write down
the analogue of (\ref{MayViet}) in reduced cohomology, but easy examples show that it will not in general be exact. However, when the vertex groups 
have Property (T) (in particular when they are compact, so that our result covers the case of locally compact groups acting properly 
on trees), we can characterize those unitary $G$-modules $M$ with $\overline{H}^1(G,M)\neq 0$:

\begin{Thm}\label{main}
Let $G$ be a locally compact group acting without inversion on a tree, with vertex-stabilizers having Property (T); let $M$ be a unitary
$G$-module.
\begin{enumerate}
\item [i)] $H^1(G,M)=0$ if and only if the map $\iota:\prod_{v\in V}M^{G_v}\rightarrow \prod_{e\in A}M^{G_e}$ is onto;
\item [ii)] $\overline{H}^1(G,M)=0$ if and only if the map $\iota:\prod_{v\in V}M^{G_v}\rightarrow \prod_{e\in A}M^{G_e}$ has dense image (where $\prod_{v\in V}M^{G_v}$ and $\prod_{e\in A}M^{G_e}$ are endowed with the product topology).
\end{enumerate}
\end{Thm}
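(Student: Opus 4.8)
The plan is to read everything off the Mayer--Vietoris sequence (\ref{MayViet}), whose topological version in low degrees I take as available. Since each vertex stabilizer $G_v$ has Property (T), the Delorme--Guichardet theorem gives $H^1(G_v,M)=0$, so the term $\prod_{v\in V}H^1(G_v,M)$ vanishes. Exactness at $H^1(G,M)$ then makes $\partial$ surjective, and exactness at $\prod_{e\in A}M^{G_e}$ gives $\ker\partial=\operatorname{im}\iota$; hence $\partial$ induces an isomorphism $\prod_{e\in A}M^{G_e}/\operatorname{im}\iota\xrightarrow{\ \sim\ }H^1(G,M)$. Statement i) is then immediate: $H^1(G,M)=0$ iff $\operatorname{im}\iota$ is everything, i.e. iff $\iota$ is onto.

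For ii) I would upgrade this algebraic isomorphism to a homeomorphism, with $\prod_{e\in A}M^{G_e}$ carrying the product topology and $H^1(G,M)$ the quotient of the topology of uniform convergence on compact sets. Granting that the induced map $\bar\partial:\prod_{e\in A}M^{G_e}/\operatorname{im}\iota\to H^1(G,M)$ is a homeomorphism, the closure of $0$ corresponds on both sides, so $\bar\partial$ descends to an isomorphism $\prod_{e\in A}M^{G_e}/\overline{\operatorname{im}\iota}\cong \overline{H}^1(G,M)$; then $\overline{H}^1(G,M)=0$ exactly when $\overline{\operatorname{im}\iota}=\prod_{e\in A}M^{G_e}$, i.e. when $\iota$ has dense image. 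Thus everything reduces to continuity of $\bar\partial$ and of its inverse.

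Continuity of $\partial$ comes from the explicit connecting map. Fixing a base vertex $v_0$, the class $\partial(\xi)$ is represented by the cocycle $c_\xi(g)=\sum_i\pi(g_i)\xi_{e_i}$, the sum running over the oriented edges of the geodesic from $v_0$ to $gv_0$ in the tree, each such edge being a $G$-translate $g_i e_i$ of an edge $e_i$ of the orientation $A$. Because $G$ is locally compact and acts continuously on a tree with discrete vertex set, the stabilizer $G_{v_0}$ is open, so $K\cdot v_0$ is finite for every compact $K\subset G$; the geodesics involved then span a finite subtree, the number of summands is bounded on $K$, and only finitely many coordinates $\xi_e$ occur. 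As the $\pi(g_i)$ are unitary, coordinatewise convergence $\xi^{(\alpha)}\to\xi$ forces $c_{\xi^{(\alpha)}}\to c_\xi$ uniformly on $K$; hence $\xi\mapsto c_\xi$ is continuous and so is $\bar\partial$.

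The harder, and in my view decisive, step is continuity of the inverse, i.e. extracting the edge data from a cocycle continuously. Given a cocycle $c$, the affine action $g\cdot x=\pi(g)x+c(g)$ restricts on each $G_v$ to an action which, by Property (T), has a fixed point; I would select the unique fixed point $m_v(c)$ lying in $(M^{G_v})^\perp$, characterized by $(\mathrm{Id}-\pi(g))m_v(c)=c(g)$ for all $g\in G_v$. This selection is linear in $c$, and quantitative Property (T) (a Kazhdan constant for a compact generating set $S_v\subset G_v$, which exists since Property (T) groups are compactly generated) yields a bound $\|m_v(c)\|\le C_v\sup_{s\in S_v}\|c(s)\|$; since $S_v$ is compact, uniform-on-compacts convergence of cocycles makes $c\mapsto m_v(c)$ continuous. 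Then $s(c)_e:=m_{o(e)}(c)-m_{t(e)}(c)$ lies in $M^{G_e}$ and defines a continuous linear map $s:Z^1(G,M)\to\prod_{e\in A}M^{G_e}$ vanishing on coboundaries, with $\partial(s(c))=[c]$ and $s(c_\xi)\equiv\xi \pmod{\operatorname{im}\iota}$. Thus $s$ descends to a continuous inverse of $\bar\partial$, completing the homeomorphism and hence the proof of ii). The main obstacle is precisely this continuous, bounded fixed-point selection: it is where Property (T) is used quantitatively rather than merely through the vanishing $H^1(G_v,M)=0$, and where the product topology on $\prod_{e\in A}M^{G_e}$ must be matched against uniform convergence on compacta.
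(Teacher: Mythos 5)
Parts i) and the forward half of ii) are correct and are essentially the paper's own argument: continuity of $\tilde{\partial}$ for the product topology, plus exactness of (\ref{MayViet}) with $H^1(G_v,M)=0$, shows that if $\im(\iota)=\ker(\partial)$ is dense then its image under the continuous surjection $\tilde{\partial}$ is a dense subspace of $Z^1(G,M)$ contained in $B^1(G,M)$. The gap is in the converse, exactly at the step you call decisive: the claimed identities for your section $s$ fail. It is not true that $s$ vanishes on coboundaries, nor that $\partial(s(c))=[c]$. The underlying reason is that the canonical fixed point $m_x(c)\in (M^{G_x})^\perp$ is not equivariant under the affine action: $\alpha(g)m_x(c)$ is an affine fixed point for $G_{gx}$, but generally not the one lying in $(M^{G_{gx}})^\perp$, and the resulting discrepancy across a non-tree edge involves the projection of $c(t_e^{-1})$ onto $M^{t_e^{-1}G_{e_-}t_e}$, which has no reason to vanish or to lie in $\im(\iota)$. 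Concretely, let $\Gamma$ be infinite with Property (T), and let $G=\Gamma_u\ast\Gamma_v\ast\langle t\rangle$ be the fundamental group of the graph of groups with two vertices $u,v$, two edges $e_1\in E(T)$ and $e_2$ (stable letter $t$), and trivial edge groups; note that all vertex stabilizers of the Bass--Serre tree have Property (T), so the theorem applies. Take $M=\ell^2(G/\Gamma_u)$ and the coboundary $c=\partial_0 m$ with $m=\delta_{t^{-1}\Gamma_u}$. Malnormality of free factors gives $M^{G_u}=\mathbb{C}\delta_{\Gamma_u}$ and $M^{G_v}=0$, so $\im(\iota)=\mathbb{C}\cdot(\delta_{\Gamma_u},\delta_{t^{-1}\Gamma_u})$. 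On the other hand, $m_{\~u}(c)=m_{\~v}(c)=-m$ while $m_{t^{-1}\~u}(c)=0$ (since $m$ itself spans $M^{t^{-1}\Gamma_u t}$), so with the only reading of your formula for which $s(c)_e\in M^{G_e}$ (endpoints of the lifted edge, i.e. $\~{e_+}$ and $t_e^{-1}\~{e_-}$) one gets $s(c)=(0,-\delta_{t^{-1}\Gamma_u})\notin\im(\iota)=\ker(\partial)$. Hence $\partial(s(c))\neq 0=[c]$: $s$ does not descend, and the homeomorphism argument collapses. (Nor can one fall back on an open mapping theorem: $B^1(G,M)$ need not be closed, so $H^1$ with the quotient topology need not even be Hausdorff.)

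The paper avoids this by never constructing an exact section; it runs your idea approximately, using the vectors that the hypothesis $\overline{H}^1(G,M)=0$ actually provides. Given $\omega\in\prod_{e\in A}M^{G_e}$, choose $m_k$ with $\partial_0 m_k\to\tilde{\partial}\omega$ uniformly on compacta. Since $\tilde{\partial}\omega(g_v)=(g_v-1)\int_{v_0}^{v}\omega$, the vectors $-m_k+\int_{v_0}^{v}\omega$ are almost $G_v$-invariant; Property (T) is then used quantitatively (as you anticipated), but through its almost-invariant-vectors characterization rather than through a fixed-point selection: these vectors must be asymptotically close to their projections $(f_k)_v:=P_v(-m_k+\int_{v_0}^v\omega)\in M^{G_v}$. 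One then verifies coordinatewise that $\iota(f_k)\to\omega$, and the stable-letter coordinate produces exactly the term $-t_e^{-1}\bigl[\tilde{\partial}\omega(t_e)+(m_k-t_em_k)\bigr]$, which tends to $0$ by the approximation hypothesis. This is the essential structural difference: the family of exact vertex-wise fixed points $m_v(c)$ retains no information about $c(t_e)$, which is precisely why it cannot invert $\partial$, whereas the paper's vectors $m_k$ approximately trivialize the cocycle on all of $G$, including at the stable letters, so the corresponding error terms vanish in the limit rather than being exactly zero.
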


Of course the first part of Theorem \ref{main} is an immediate consequence of the Mayer-Vietoris sequence (\ref{MayViet}), we record it only to contrast it with the second part.

As semi-direct products by $\mathbb{Z}$ can be viewed as particular HNN-extensions, we may apply Theorem 1 to them. We will prove:

\begin{Cor}\label{semidir} Let $\theta$ be an automorphism of a locally compact group $\Gamma$ with Property (T). 
Let $M$ be a unitary $G$-module, with
$G=:\Gamma\rtimes_{\theta} \mathbb{Z}$. Let $t$ be the generator of $\mathbb{Z}$ such that $tht^{-1}=\theta(h)\;(h\in \Gamma)$.
\begin{enumerate}
\item [i)] $H^1(G,M)=0$ if and only if 1 is not a spectral value of $t|_{M^{\Gamma}}$;
\item [ii)] $\overline{H}^1(G,M)=0$ if and only if 1 is not an eigenvalue of $t|_{M^{\Gamma}}$.
\end{enumerate}
\end{Cor}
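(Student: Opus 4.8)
The plan is to realize $G=\Gamma\rtimes_{\theta}\mathbb{Z}$ as the fundamental group of a graph of groups with a single vertex and a single loop, and then to read off the conclusion from Theorem~\ref{main}. Indeed, the semidirect product is exactly the HNN-extension with vertex group $G_v=\Gamma$, edge group $G_e=\Gamma$ (the full group, embedded by $\mathrm{id}$ at one end of the loop and by $\theta$ at the other), and stable letter $t$; the associated Bass--Serre tree is the line, on which $\Gamma$ (being normal) acts trivially and $t$ acts by translation, so that every vertex stabilizer equals $\Gamma$ and hence has Property~(T). Thus Theorem~\ref{main} applies. Since there is one vertex and (after choosing an orientation) one edge, both $\prod_{v}M^{G_v}$ and $\prod_{e\in A}M^{G_e}$ are canonically identified with $M^{\Gamma}$, and everything reduces to understanding the single map $\iota\colon M^{\Gamma}\to M^{\Gamma}$.

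Next I would compute $\iota$. Because $\Gamma$ is normal in $G$ and $t$ conjugates $\Gamma$ via $\theta$, the subspace $M^{\Gamma}$ is invariant under $\pi(t)$: for $m\in M^{\Gamma}$ and $h\in\Gamma$ one has $\pi(h)\pi(t)m=\pi(t)\pi(\theta^{-1}(h))m=\pi(t)m$, so $\pi(t)m\in M^{\Gamma}$. Hence $U:=\pi(t)|_{M^{\Gamma}}$ is a well-defined unitary operator on the Hilbert space $M^{\Gamma}$. The map $\iota$ is the difference of the two restrictions associated with the two ends of the loop; along the loop one end contributes the identity inclusion and the other the twist by the stable letter, so that $\iota(m)=m-\pi(t)m=(1-U)m$ (the precise form depends on orientation conventions, but any such convention produces $\pm(1-U^{\pm1})$, and these differ only by composition with the unitary $U^{\pm1}$, which alters neither kernel, range, nor range closure). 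As a sanity check, $\ker\iota=\{m\in M^{\Gamma}:\pi(t)m=m\}=M^{G}$, matching exactness of (\ref{MayViet}) at the first copy of $M^{\Gamma}$.

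It then remains to translate the conclusions of Theorem~\ref{main} into spectral language, using that $1-U$ is a \emph{normal} operator (being a polynomial in the unitary $U$), so that $\ker(1-U)=\ker(1-U^{*})$ and $(\operatorname{im}(1-U))^{\perp}=\ker(1-U^{*})$. For~(i): $H^1(G,M)=0$ iff $\iota=1-U$ is onto; a surjective normal operator has trivial kernel (as $\ker(1-U)=\ker(1-U^{*})=(\operatorname{im}(1-U))^{\perp}=0$), hence by the open mapping theorem it is invertible, and conversely invertibility gives surjectivity; thus $\iota$ is onto iff $1\notin\sigma(U)$, i.e.\ $1$ is not a spectral value of $t|_{M^{\Gamma}}$. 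For~(ii): $\overline{H}^1(G,M)=0$ iff $\iota$ has dense image, i.e.\ $(\operatorname{im}(1-U))^{\perp}=\ker(1-U^{*})=0$; by normality this is $\ker(1-U)=0$, i.e.\ $1$ is not an eigenvalue of $t|_{M^{\Gamma}}$.

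The routine functional analysis in the last paragraph is not where the difficulty lies; the step needing the most care is the identification of $\iota$ in the second paragraph, namely pinning down that the loop contributes precisely the twist by $\pi(t)$ (rather than some other combination) and checking the $\pi(t)$-invariance of $M^{\Gamma}$. Once $\iota$ is correctly seen as $1-U$ up to a unitary factor, the corollary follows immediately from Theorem~\ref{main}.
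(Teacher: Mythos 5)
Your proof is correct, and its skeleton is exactly the paper's: realize $G$ as the one-loop HNN extension (vertex group $\Gamma$, edge group $\Gamma$, stable letter $t$), invoke Theorem \ref{main} (the single vertex stabilizer is $\Gamma$, which has Property (T)), and identify $\iota$ with $1-t^{\mp1}$ acting on $M^{\Gamma}$; with the paper's orientation convention $(\iota f)_e=f_{e_+}-t_e^{-1}f_{e_-}$ one gets $1-U^{-1}$ rather than your $1-U$, but, as you observe, these differ by a unitary factor and so have the same kernel and the same closure of image. The one point where you genuinely diverge is the passage between surjectivity (resp.\ density of image) of $\iota$ and invertibility (resp.\ injectivity). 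You handle it purely by operator theory: $1-U$ is normal, a surjective normal operator has trivial kernel and is therefore invertible by the open mapping theorem, and dense image is equivalent to $\ker(1-U^{*})=\ker(1-U)=0$. The paper instead first notes that $\ker\iota=M^{G}$ and proves a small cohomological Claim: if $M^{G}\neq 0$ then $\overline{H}^1(G,M)\neq 0$, because $\overline{H}^1(G,M^{G})=H^1(G,M^{G})=\hom(G,M^{G})$ is nonzero as $G$ surjects onto $\mathbb{Z}$; combining this Claim with Theorem \ref{main} yields $M^{G}=0$, hence injectivity of $\iota$ in part (i) and the eigenvalue statement in part (ii). (In the converse direction of (ii) the paper still uses the normality identity $(\im\,\iota)^{\perp}=\ker\iota$, so your argument is in effect a systematic use of what the paper uses once.) The two finishes are interchangeable and equally short; yours is self-contained functional analysis, while the paper's Claim isolates a reusable cohomological fact, namely that $M^{G}\neq 0$ already obstructs vanishing of $\overline{H}^1(G,M)$ whenever $G$ admits a nonzero homomorphism to $\mathbb{Z}$.
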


This paper is organized as follows. Section \ref{Prelim} gives a direct construction of Chiswell's Mayer-Vietoris sequence, in degrees 0 and 1: this does not seem to appear explicitly in the literature and is needed for our computations. In Section \ref{HaagerupCoc}, we study a specific 1-cocycle $b\in Z^1(G,\ell^2(\mathcal{E}))$, where $\mathcal{E}$ is the set of oriented edges of $\mathcal{T}$: it is the cocycle such that $\|b(g)\|^2= 2d(gx_0,x_0)$, used to prove that a group acting properly on a tree has the Haagerup property; we call $b$ the {\it Haagerup cocycle}. 
We re-prove the known fact that $b$ is trivial in $H^1(G,\ell^2(\mathcal{E}))$ if and only if $G$ has a fixed vertex; our proof provides a cohomological characterization of Serre's Property (FA). We also prove that $b$ is trivial in $\overline{H}^1(G,\ell^2(\mathcal{E}))$ if and only if the $G$-action on $\mathcal{T}$ is elementary, i.e. $G$ has a finite orbit in $\mathcal{T}\cup\partial\mathcal{T}$. Section \ref{PfMainTh} is dedicated to the proof of Theorem \ref{main} and Corollary \ref{semidir}. In Section \ref{SectL2Betti} we study the consequences of the Mayer-Vietoris sequence on the vanishing of the first $\ell^2$-Betti number for discrete groups acting on trees; in particular we prove Theorem \ref{L2 Betti}. Section \ref{SectLargeGroups} studies the connection with ``large" closed groups of automorphisms of a locally finite tree, and it is there that one will find the proof of Nebbia's  Theorem \ref{twoorbits}. 


{\bf Acknowledgements:} The first named author would like to thank Universit\'e de Neuch\^atel for their hospitality during a visit which made this joint work possible. The second named author thanks C. Weibel for pointing out Chiswell's paper, and D. Gaboriau for useful exchanges on $\ell^2$-Betti numbers.

\section{Preliminaries}\label{Prelim}

The natural framework for our study is Bass-Serre Theory (section 5 in \cite{Serre}), of which we first recall the relevant parts.

A {\it graph} is a pair $X=(V,E)$ where $V$ is the set of vertices, $E$ is the set of oriented edges; $E$ is equipped with a fixed-point free
involution $e\mapsto\overline{e}$ and with maps $E\rightarrow V: e\mapsto e_+$ and $E\rightarrow V: e\mapsto e_-$ ($e_-$ is the initial vertex and $e_+$ the
terminal vertex of the edge $e$), such that $\overline{e}_+=e_-$ for every $e\in E$. An {\it orientation} of $X$ is the choice of a fundamental
domain for the involution on $E$.

A {\it graph of groups} $({\cal G}, X)$ is the data of a connected graph $X=(V,E)$ and, for every $v\in V$ a discrete group $G_v$, for every edge
$e\in E$ a discrete group $G_e$ such that $G_e =G_{\overline{e}}$ and a monomorphism $\sigma_e: G_e\rightarrow G_{e_+}$.

Let $T$ be a maximal tree in $X$. Let $F(E)=\langle t_e\;(e\in E)|\varnothing\rangle$ be the free group on $E$.
The {\it fundamental group} $G=:\pi_1({\cal G},X,T)$ is the quotient of the free product $(\ast_{v\in V}G_v)\ast F(E)$ by the following 
set of relations:
\begin{equation}
\left\{
\begin{array}{ll}
t_e\sigma_e(g_e)t_e^{-1}=\sigma_{\overline{e}}(g_e) & (e\in E,\,g_e\in G_e)\\
t_et_{\overline{e}}=1 & (e\in E) \\
t_e=1 & (e\in E(T))\\
 \end{array}\right.
\end{equation}
 where $E(T)$ is the edge set of $T$. Assume that some orientation $A$ has been chosen. For $e\in A$, we shall identify $G_e$ with 
 $\sigma_e(G_e)$, and we shall denote by $\theta_{e}$ (instead of $\sigma_{\overline{e}}$) the monomorphism $G_e\rightarrow G_{e_-}$.
 Then $G$ can also be described (see \cite{Chi}, p. 67) as the quotient of the free product $(\ast_{v\in V}G_v)\ast F(A)$
 by the relations:
 \begin{equation}\label{pres}
 \left\{
\begin{array}{ll}
t_eg_et_e^{-1}=\theta_e(g_e) & (e\in A,\,g_e\in G_e)\\
t_e=1 & (e\in E(T)\cap A)\\
 \end{array}\right.
 \end{equation}
 
Recall from \cite{Serre} that the universal cover $\mathcal{T}$ of $X$  has vertex set  $\~V = \underset{v\in V}{\sqcup} G/G_v$ and oriented edge set $\~A = \underset{e\in A}{\sqcup} G/G_e$. Furthermore, if $\pi: \mathcal{T} \to X$ denotes the canonical projection, then there is a lifting $X\to \mathcal{T}$ denoted by $x\mapsto \~x$ such that $\pi(\~x) = x$, and $\~T$, the lift of the maximal tree $T$ is a subtree of $\mathcal{T}$. With this notation in place, we have that\footnote{We note that our convention here does not agree with Serre's, who identifies edge groups $G_e$ with their image in $G_{e_+}$.}, for $e\in A$:

\begin{equation}\label{extrorig}
(g\~e)_+ = g\~{e_+}\qquad \text{ and }\qquad (g\~e)_- = gt_e^{-1}\~{e_-}.
\end{equation}

This gives rise to the following short exact sequences of $G$-modules

\begin{equation}\label{shortex}
0\to \Z(\~A)\overset{\delta}{\to} \Z(\~V)  \overset{q}{\to} \Z \to 0. 
\end{equation}

As $G$-modules, these decompose as $\Z(\~A) = \Oplus{e\in A} \Z(G.\~e)$ and $\Z(\~V) = \Oplus{v\in V}\Z(G.\~v) $, so that
 $\delta(g.\~e) = g\~e_+ - gt_e^{-1}\~e_-$ is just the boundary operator on the tree $\mathcal{T}$, and $q$ is the augmentation defined by $q(g.\~v) = 1$. 

Now, for two vertices $v,w\in V$, we denote by $[v,w]$, (respectively $[g\~v,g'\~w]$) the unique \emph{oriented} edge path from $v$ to $w$ (respectively from $g\~v$ to $g'\~w$) in the maximal tree $T$, (respectively in $\mathcal{T}$).

 For $e\in E$ or $e\in\~E$, we define

$$\varepsilon_{vw}(e)= \left\{
\begin{array}{ll}
0 & \mbox{if $e\notin [v,w]$;}\\
+1 & \mbox{if $e\in [v,w]$ and $e$ points away from $v$;} \\
-1 & \mbox{if $e\in [v,w]$ and $e$ points towards $v$.}
\end{array}\right. $$

Note that if $e\in E\setminus E(T)$ then $\varepsilon_{vw}(e) = 0$ for every $v,w \in V$. Furthermore,  since $T$ lifts to $\~T$ we have that $\~{[v,w]} =[\~v,\~w]$ and  so  $\pi[\~v,\~w]= \pi\~{[v,w]} = [v,w]$, in particular  $\varepsilon_{\~v\~w}(\~e) = \varepsilon_{vw}(e)$ for every $v,w\in V$ and $e\in E$. 

\begin{remark}\label{epsilonAcocyle}
 Observe that the following hold for all vertices $u,v,w\in V$, or $u,v,w\in \~V$: 
$$\varepsilon_{vw} = - \varepsilon_{wv} $$ $$ \varepsilon_{uv} + \varepsilon_{vw} = \varepsilon_{uw}$$
\end{remark}

This allows us to define $\int_v^w [v,w] = \sum\varepsilon_{vw}(e) e$, where the sum is taken over ${e\in A}$, or ${e\in \~A}$, according to whether $v,w\in V$ or $\~V$. 

Using this, we fix a base vertex $v_0 \in V$ and define $s: \Z(\~V) \to \Z(\~A)$ by

$$s(g\~v) = \int_{\~v_0}^{g\~v}  [\~v_0,g\~v].$$

It is then straightforward to verify that $s$ is a left-inverse to $\delta$:

$$s\circ \delta (g\~e) = \int_{\~v_0}^{(g\~e)_+} [\~v_0,(g\~e)_+] -  \int_{\~v_0}^{(g\~e)_-} [\~v_0,(g\~e)_-] = g\~e.$$

Now, for abelian groups $C,D$ denote by $\hom(C,D)$ the abelian group of all homomorphisms $C\to D$. If furthermore, $C$ and $D$ are $G$-modules, then $\hom(C,D)$ is also a $G$-module by taking $g.f := g\circ f\circ g^{-1}$ and so $\hom(C,D)^G$ is the collection of $G$-equivariant homomorphisms, (i.e. intertwiners) from $C$ to $D$. For $f:C\rightarrow D$ a homomorphism, and $M$ an abelian group, the {\it transposed} homomorphism $f^t:\hom(D,M)\rightarrow \hom(C,M)$ is defined by $f^t(g)=g\circ f$.

Let $M$ be a $G$-module. Transposing the exact sequence (\ref{shortex}), we get a new exact sequence:
\begin{equation}\label{ShortHom}
0 \to M = \hom(\Z,M) \overset{q^t}{\to} \hom(\Z(\~V), M) \overset{\delta^t}{\to} \hom(\Z(\~A) , M) \to 0,
\end{equation}
where $(q^t(m))(g\~v) \equiv m$, and $(\delta^t(f))(g\~e) = f(\delta(g\~e)) = f((g\~e)_+)-f((g\~e)_-)$. Furthermore, we have for $\omega \in \hom(\Z(\~A) , M)$

$$(s^t(\omega))(g\~v) = \int_{\~v_0}^{g\~v} \omega:=\omega(\int_{\~v_0}^{g\~v}[\~v_0, g\~v]).$$

We note that for $\omega=(\omega_e)_{e\in A}\in \prod_{e\in A} M^{G_e}$, and any three vertices $u,v,w\in V$, we then have that 
\begin{eqnarray*}
\int_v^w \omega&=& \int_{\~v}^{\~w} \omega, \\
\int_v^w \omega&=&-\int_w^v \omega, 
\\
\int_u^v \omega + \int_v^w \omega&=&\int_u^w \omega.
\end{eqnarray*}

Furthermore, if $e\in E(T)\cap A$ then 
\begin{equation}\label{edge}
\int_{e_-}^{e_+}\omega=\omega_e.
\end{equation}


Let $C^k(G,M)$ be the space of $k$-cochains on $G$ with coefficients in $M$, i.e. the set of maps $G^k\rightarrow M$. Applying the functor $C^*(G, \cdot)$ to the short exact sequence (\ref{ShortHom}), we get a commutative diagram:
$$\left.\begin{array}{ccccccccc}
 & & \uparrow &  & \uparrow &  & \uparrow & & 
  \\
0 & \rightarrow & C^1(G,M) & \rightarrow & C^1(G,\hom(\mathbb{Z}(\tilde{V}),M))& \rightarrow & C^1(G,\hom(\mathbb{Z}(\tilde{A}),M))& \rightarrow & 0 \\
 & & \uparrow &  & _{\partial_0}\uparrow &  & \uparrow & & 
  \\0 & \rightarrow & M & \rightarrow & \hom(\mathbb{Z}(\tilde{V}),M) & \rightarrow & \hom(\mathbb{Z}(\tilde{A}),M)& \rightarrow & 0\end{array}\right.$$
where  $\partial_0 : \hom(\Z(\~V), M) \to C^1(G,\hom(\Z(\~V), M))$ is the map given by $(\partial _0\phi)(g) = (g-1) \phi$. This in turn  yields
the long exact sequence \cite[Theorem 2]{Chi}:
$$0\to M^G\overset{q^t_*}{\to} \hom(\Z(\~V),M)^G \overset{\delta^t_*}{\to} \hom(\Z(\~A),M)^G \overset{\partial}{\to} H^1(G,M)\to \cdots,$$
where the connecting map is given by the Snake Lemma:

$$\~\partial\omega = ((q_*^t)^{-1}\circ \partial_0 \circ (\delta_*^t)^{-1})(\omega).$$

The decomposition of $\~V$ and  $\~A$ into $G$ orbits yields that $H^k(\Z(\~V), M) = \underset{v\in V}{\prod}H^k(\Z(G/G_v),M)$ and $H^k(\Z(\~A), M) = \underset{e\in A}{\prod}H^k(\Z(G/G_e),M)$. By Shapiro's Lemma \cite[Proposition 6.2]{Brown} we have that for any $x\in V\sqcup A$
$$H^k(G_x, M) \cong H^k(G, \hom(\Z(G/G_x),M)).$$
Hence, recalling that $M^{G_x} \cong \hom(\Z(G/G_x), M)^G$ and setting $\Delta = q_*^t$ and $\iota = \delta_*^t$ we get Chiswell's Mayer-Vietoris sequence:

$$0\to M^G\overset{\Delta}{\to} \underset{v\in V}{\prod}M^{G_v} \overset{\iota}{\to}\underset{e\in A}{\prod}M^{G_e}\overset{\partial}{\to} H^1(G,M)\to \cdots$$

 We proceed to record the maps $\Delta$ and $\iota$ appearing in the Mayer-Vietoris sequence $(\ref{MayViet})$ and observe that these are well defined for $G$ a topological group.
 \begin{itemize}
 \item The map $\Delta: H^k(G,M)\rightarrow\prod_{v\in V} H^k(G_v,M)$ arises by restricting the action from $G$ to the $G_v$'s;
 in particular $\Delta: M^G\rightarrow \prod_{v\in V}M^{G_v}$ is given by
 $(\Delta m)_v = m$ (where $m\in M^G,\,v\in V$).
 \item The map $\iota: \prod_{v\in V}H^k(G_v,M)\rightarrow \prod_{e\in A}H^k(G_e,M)$ is given at the level of $k$-cocycles by
 $$(\iota\omega)_e = \omega_{e_+}|_{G_e}-t_e^{-1}(\omega_{e_-}\circ\theta_e)$$
 where $e\in A,\;\omega\in\prod_{v\in V}Z^k(G_v,M)$. In particular $\iota:\prod_{v\in V}M^{G_v}\rightarrow \prod_{e\in A}M^{G_e}$ is given
 by $(\iota f)_e = f_{e_+}-t_e^{-1}f_{e_-}$, for $f\in \prod_{v\in V}M^{G_v}$.
 \end{itemize}

\begin{Ex} Let $G=\mathbb{F}_2=\langle t_1,t_2|\varnothing\rangle$ be the free group on two generators, viewed as the fundamental
group of the graph of groups with one vertex and two edges, and all groups trivial. For $M$ a $G$-module, the map $\iota: M
\rightarrow M\oplus M$ is given by $f\mapsto ((1-t_1^{-1})f,(1-t_2^{-1})f)$. If $M$ is a unitary $G$-module, it was shown by Guichardet \cite{Guic} that 
$\iota$ is {\it not} onto, so that $H^1(\mathbb{F}_2,M)\neq 0$ for every unitary $G$-module $M$. Note that examples of unitary,
irreducible $G$-modules $M$ with $\overline{H}^1(G,M)=0$ were constructed in \cite{MarVal}.
\end{Ex}

\medskip
Next we consider the connecting map $\partial: \prod_{e\in A}M^{G_e}\rightarrow H^1(G,M)$: it is not described explicitly in Chiswell's paper \cite{Chi}, nor are we aware of any published description of this map\footnote{Chiswell's paper was preceded by papers of Swan \cite{Swan}
where $(\ref{MayViet})$ is established for amalgamated products, and Bieri \cite{Bie} where $(\ref{MayViet})$ is established for
HNN extensions. But these papers do not contain any explicit description of $\partial$ either.}, although it might of course be known to experts. It is important for our main results to have this map explicitly, where we will also not assume that groups are discrete; we therefore will spend some time developing it. 

\break

Let $\omega \in \hom(\Z(\~A), M)^G$ so that

\begin{eqnarray*}
((\partial \circ s^t)(\omega)(g_1))(g\~v) &=& ((g_1-1)s^t(\omega))(g\~v)\\
%
%
&=& g_1(\int_{\~v_0}^{g_1^{-1}g\~v} \omega ) - \int_{\~v_0}^{g\~v} \omega\\
&=& \int_{g_1\~v_0}^{g\~v} \omega + \int_{g\~v}^{\~v_0}\omega\\
&=& -\int_{\~v_0}^{g\~v_0}\omega,
\end{eqnarray*}
that is, 
\begin{equation}\label{partial}
\~\partial \omega(g) := ((q_*^t)^{-1}\circ \partial \circ ((\delta_*^t)^{-1})(\omega))(g)= -\int_{\~v_0}^{g\~v_0}\omega.
\end{equation}

However, we would like to have a description of $\~\partial$ purely in terms of the graph of groups data.

\begin{Prop}Let $\omega\in\prod_{e\in A} M^{G_e}$.
\begin{enumerate}
\item For $v\in V$ and $g_v\in G_v$, we have: $(\partial \omega)(g_v)=(g_v-1)\int_{v_0}^{v}\omega$.
\item For $e\in A\backslash E(T)$, we have: $(\partial \omega)(t_e)=t_e\int_{v_0}^{e_+}\omega -t_e\omega_e -\int_{v_0}^{e_-}\omega$.
\end{enumerate}\end{Prop}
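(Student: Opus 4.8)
The plan is to start from the explicit formula \eqref{partial} for the connecting map, namely $\~\partial\omega(g) = -\int_{\~v_0}^{g\~v_0}\omega$, and to evaluate it on the generators of $G$. By the presentation \eqref{pres}, $G$ is generated by the vertex groups $G_v$ ($v\in V$) together with the stable letters $t_e$ ($e\in A$), so the two formulas in the statement determine $\partial\omega$ completely. The three tools I will use throughout are: the additivity and sign identities for $\int$ recorded just after Remark \ref{epsilonAcocyle}; the $G$-equivariance $\int_{ga}^{gb}\omega = g\int_a^b\omega$, which holds because $\omega$, viewed in $\hom(\Z(\~A),M)^G$, is an intertwiner while $G$ acts on $\Z(\~A)$ by graph automorphisms (so $g\cdot[\~a,\~b]=[g\~a,g\~b]$); and the comparison $\int_v^w\omega=\int_{\~v}^{\~w}\omega$ between the graph $X$ and the tree $\mathcal{T}$.

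For part (1), I would insert the vertex $\~v$, which is fixed by $G_v$ since it represents the trivial coset in $G/G_v$; in particular $g_v\~v = \~v$. Decomposing the geodesic $\~v_0\to g_v\~v_0$ as $\~v_0\to\~v = g_v\~v\to g_v\~v_0$ and applying equivariance to the second leg gives
\[
\int_{\~v_0}^{g_v\~v_0}\omega = \int_{\~v_0}^{\~v}\omega + g_v\int_{\~v}^{\~v_0}\omega = (1-g_v)\int_{\~v_0}^{\~v}\omega.
\]
Negating and using $\int_{\~v_0}^{\~v}\omega=\int_{v_0}^{v}\omega$ yields $(\partial\omega)(g_v)=(g_v-1)\int_{v_0}^{v}\omega$, as claimed.

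For part (2), the key is to locate the endpoints of the translated edge $t_e\~e$ via \eqref{extrorig} with $g=t_e$: one gets $(t_e\~e)_- = t_e t_e^{-1}\~{e_-} = \~{e_-}$ and $(t_e\~e)_+ = t_e\~{e_+}$. I would then split the geodesic $\~v_0\to t_e\~v_0$ into three legs, $\~v_0\to\~{e_-}$, then the single edge $\~{e_-}\to t_e\~{e_+}$, then $t_e\~{e_+}\to t_e\~v_0$. The first leg equals $\int_{v_0}^{e_-}\omega$; the middle leg is a single edge pointing away from its initial vertex $\~{e_-}$, hence integrates to $+\omega(t_e\~e)=t_e\omega_e$ by equivariance (cf. \eqref{edge}); and the last leg equals $t_e\int_{\~{e_+}}^{\~v_0}\omega = -t_e\int_{v_0}^{e_+}\omega$ after pulling out $t_e$. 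Summing the three legs and negating gives the stated formula for $(\partial\omega)(t_e)$.

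I expect the only delicate point to be part (2): one must apply the orientation convention of \eqref{extrorig} correctly, keeping in mind that $t_e$ moves the two endpoints of $\~e$ asymmetrically (it sends the initial vertex $t_e^{-1}\~{e_-}$ back to $\~{e_-}$ while translating the terminal vertex to $t_e\~{e_+}$), and then choose the three-leg decomposition so that the translated edge $t_e\~e$ appears as a single geodesic segment with the correct sign $+1$. Once the endpoints are pinned down, the remainder is a routine application of additivity and equivariance. As a consistency check, taking $e\in E(T)\cap A$ (where $t_e=1$ in $G$) collapses the formula via \eqref{edge} to $\int_{e_-}^{e_+}\omega-\omega_e=0$, matching $(\partial\omega)(1)=0$.
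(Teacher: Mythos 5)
Your proposal is correct and follows essentially the same route as the paper: both parts evaluate the explicit formula \eqref{partial} on generators, use the fixed point $g_v\~v=\~v$ together with $G$-equivariance of $\int$ for part (1), and for part (2) decompose the geodesic between $\~v_0$ and $t_e\~v_0$ into three legs through the endpoints of the translated edge, identified via \eqref{extrorig}. The only difference is cosmetic (you traverse the paths in the opposite direction from the paper and read the middle leg as the edge $t_e\~e$ forwards rather than pulling back by $t_e^{-1}$), and your final consistency check for $e\in E(T)\cap A$ is a nice touch not in the paper.
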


\begin{proof} Fix $\omega\in\prod_{e\in A} M^{G_e}$; the canonical isomorphism $\prod_{e\in A} M^{G_e}\rightarrow \hom(\Z(\~A), M)^G$ maps $\omega$ to $\~\omega$ where $\~\omega_{g\~e}=g\omega_e$ for $e\in A$ and $g\in G$.
\begin{enumerate}
\item If $g_v\in G_v$ then using the fact that $g_v.\~v = \~v$ we have by (\ref{partial}):

\begin{eqnarray*}
(\partial \omega)(g_v) &=&  -\int_{\~v_0}^{g_v\~v_0}\~\omega \\
&=& \int_{g_v\~v_0}^{\~v}\omega +  \int_{\~v}^{\~v_0}\~\omega\\
&=& (g_v-1)\int_{\~v_0}^{\~v}\~\omega  =  (g_v-1)\int_{v_0}^{v}\omega.
\end{eqnarray*}
where we have used $\pi\~{[v,w]} = [v,w]$, as observed earlier.

\item For $e\in A\backslash E(T)$, we have by (\ref{partial}):
\begin{eqnarray*}
(\~\partial \omega)(t_e) &=&  \int_{t_e\~v_0}^{\~v_0}\~\omega \\
&=&  \int_{t_e\~v_0}^{t_e\~{e_+}}\~\omega + \int_{t_e\~{e_+}}^{\~{e_-}}\~\omega  + \int_{\~{e_-}}^{\~v_0}\~\omega \\
&=&  t_e\int_{\~v_0}^{\~{e_+}}\~\omega + t_e\int_{\~{e_+}}^{t_e^{-1}\~{e_-}}\~\omega  - \int_{\~v_0}^{\~{e_-}}\~\omega .
\end{eqnarray*}
But $\~{e_+}=\~e_+$ and $t_e^{-1}\~{e_-}=\~e_-$ by (\ref{extrorig}), so
$(\~\partial \omega)(t_e)= t_e\int_{v_0}^{{e_+}}\omega  - t_e\int_{e_-}^{{e_+}}\omega -\int_{{e_-}}^{v_0}\omega $. Finally $\int_{e_-}^{e_+}\omega =\omega_e$ by (\ref{edge}), which concludes the proof.
\end{enumerate}
\end{proof}



\begin{Lem}\label{conn}
The class of $\tilde{\partial}\omega$ is independent of choice of the base-vertex $v_0$
\end{Lem}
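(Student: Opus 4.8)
The plan is to show that replacing the base vertex $v_0$ by another base vertex $v_1$ changes the cocycle $\~\partial\omega$ only by a principal coboundary, so that its class in $H^1(G,M)$ (indeed in $\overline H^1(G,M)$) is unaffected. Write $\~\partial_{v_0}\omega$ and $\~\partial_{v_1}\omega$ for the two cocycles produced by formula $(\ref{partial})$, regarding $\omega$ as the $G$-equivariant homomorphism in $\hom(\Z(\~A),M)^G$. I would set
$$m := \int_{\~v_0}^{\~v_1}\omega \in M$$
and aim to prove, for all $g\in G$, the identity $\~\partial_{v_1}\omega(g) - \~\partial_{v_0}\omega(g) = -(g-1)m$. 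Since $g\mapsto (g-1)m$ is precisely the coboundary of $m$, this is exactly what is needed, and it holds verbatim in reduced cohomology because the discrepancy is an honest principal coboundary, not a limit of coboundaries.

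To establish the identity I would start from $\~\partial_{v_1}\omega(g) = -\int_{\~v_1}^{g\~v_1}\omega$ and break the integration path at $\~v_0$ and at $g\~v_0$, using the additivity recorded in Remark \ref{epsilonAcocyle}:
$$\int_{\~v_1}^{g\~v_1}\omega = \int_{\~v_1}^{\~v_0}\omega + \int_{\~v_0}^{g\~v_0}\omega + \int_{g\~v_0}^{g\~v_1}\omega.$$
Here the first summand equals $-m$ by antisymmetry of the integral, and the middle summand equals $-\~\partial_{v_0}\omega(g)$ directly from $(\ref{partial})$. The third summand is the only one requiring an argument beyond bookkeeping, and it is handled by the equivariance identity
$$\int_{g\~v_0}^{g\~v_1}\omega = g\int_{\~v_0}^{\~v_1}\omega = gm.$$

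This equivariance step is the heart of the proof, though I do not expect it to present a genuine obstacle. It holds because $g$ acts on $\mathcal{T}$ by a path-preserving automorphism, so $[g\~v_0,g\~v_1] = g[\~v_0,\~v_1]$ and $\varepsilon_{g\~v_0,g\~v_1}(g\~e) = \varepsilon_{\~v_0,\~v_1}(\~e)$; hence $\int_{g\~v_0}^{g\~v_1}[g\~v_0,g\~v_1] = g\cdot\int_{\~v_0}^{\~v_1}[\~v_0,\~v_1]$ in $\Z(\~A)$, and applying the $G$-equivariant homomorphism $\omega \in \hom(\Z(\~A),M)^G$ yields the claim. Substituting the three evaluations gives $\int_{\~v_1}^{g\~v_1}\omega = -m + \int_{\~v_0}^{g\~v_0}\omega + gm$, so that $\~\partial_{v_1}\omega(g) = \~\partial_{v_0}\omega(g) + m - gm = \~\partial_{v_0}\omega(g) - (g-1)m$, as desired. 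The main care needed throughout is consistency of the sign and orientation conventions built into the integral notation; once those are fixed, the computation is a clean combination of additivity and equivariance.
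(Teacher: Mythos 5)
Your proof is correct and takes essentially the same route as the paper: both show that the two cocycles differ by the coboundary of $\int_{v_0}^{v_1}\omega$, using the additivity and antisymmetry of the path integral (Remark \ref{epsilonAcocyle}) together with the $G$-equivariance of $\omega\in\hom(\Z(\~A),M)^G$. The only difference is cosmetic: you establish the identity $\tilde{\partial}_{v_1}\omega(g)-\tilde{\partial}_{v_0}\omega(g)=-(g-1)\int_{\~v_0}^{\~v_1}\omega$ for every $g\in G$ directly from formula (\ref{partial}), whereas the paper verifies it only on the generators $(\sqcup_{v\in V}G_v)\sqcup\{t_e:e\in A\}$, which suffices since a cocycle is determined by its values on a generating set.
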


\begin{proof}
 
 
 
 Consider $\tilde{\partial}'\omega$ defined similarly to $\tilde{\partial}\omega$ but  with respect to a base vertex $v_1$. It is a straightforward computation, which uses Remark \ref{epsilonAcocyle}, that for each $g\in  (\sqcup_{v\in V}G_v)\sqcup \{t_e:e\in A\}$ the difference is a co-boundary. Namely, $$\tilde{\partial}\omega (g) - \tilde{\partial}'\omega (g) = (g-1) \int_{v_0}^{v_1} \omega.$$
\end{proof}


Then, $\partial: \prod_{e\in A}M^{G_e}\rightarrow H^1(G,M)$ as the composition of $\tilde{\partial}$ with the
canonical map $Z^1(G,M)\rightarrow H^1(G,M)$. 



   \section{On the Haagerup Cocycle}\label{HaagerupCoc}

Let $G$ be a topological group acting without inversion on the tree $\mathcal{T}$. Denote by $\mathcal{V}$ and $\E$ the set of vertices and oriented edges, respectively, of the tree  $\mathcal T$. We then have a natural action of $G$ on $\ell^2(\E)$ which we will now study. 
 
Recall that the removal of an (open) edge disconnects the tree into two connected components, called {\it half-trees}. To each $h\in \E$, we will associate the connected component which contains $h_+$. This way, we identify $\E$ with the set of half-trees: as a half-tree, the edge $h$ corresponds to $\{x\in\mathcal{V}: d(x,h_+)<d(x,h_-)\}$. This allows us to write: $x\in h$.

Let $x\in \mathcal V$ and consider the characteristic function $\1_x = \{h\in \E: x \in h\}$ of the set of edges pointing towards $x$. With this notation, we fix an initial vertex $x_0 \in \mathcal V$ and consider $$b(g) = (g-1)\cdot \1_{x_0} = \1_{gx_0} - \1_{x_0}.$$ 
 
Since $b$ is a formal cocycle, the observation that  $\|b(g)\|_2^2 = 2d(gx_0,x_0)$ shows immediately that $[b]\in H^1(G,\ell^2(\E)).$ Furthermore, $b$ is bounded if and only if $G$ has a fixed vertex, and similarly, $G$ acts properly on $\mathcal T$ if and only if $b$ is proper on $G$. We note that the class of $b$ is independent of the base vertex $x_0$. We will call $b$ the \emph{Haagerup cocycle with respect to the base vertex $x_0$}. This cocycle is a witness to the fact that groups that admit a proper action on a tree have the Haagerup property. Note that the class $[b]$  is clearly independent of the choice of $x_0$; this will allow us to choose $x_0$ in an appropriate way, when studying cohomological properties of $b$.

We wish to now understand when this cocycle is trivial in the context of Chiswell's Mayer-Vietoris sequence. Let $X=(V,E)$ be the quotient graph $G\backslash\mathcal{T}$. Fix a base vertex $v_0\in V$, a maximal tree $T$ of $X$, and an orientation $A$ of $E$. 
As before, denote by $\pi:\mathcal{T}\rightarrow X$ the quotient map, with section $x\mapsto \~x$ for $x\in V\cup T$. This lifting  has the property that  $\~{e}_+=\~{e_+}$. Note that $\overline{\~e}={\~{\overline e}}$.

For $e\in A\cap E(T)$, let us denote by $\vartheta_e:G_e\rightarrow G_{e_-}$  the natural inclusion.  For $e\in A\backslash E(T)$, recall that $\~{e}_-= t_e^{-1}\~{e_-}$, and we set $\vartheta_e(g_e)=t_eg_et_e^{-1}$ for $g_e\in G_e$.

We now observe that $b$ is clearly bounded, hence cohomologically trivial on each vertex group $G_v$ ($v\in V$). This means that $[b]$ is in the kernel of $\Delta: H^1(G,\ell^2(\mathcal{E}))\rightarrow \prod_{v\in V} H^1(G_v,\ell^2(\mathcal{E}))$, so by the Mayer-Vietoris sequence (\ref{MayViet}) it is in the image of $\partial$. We explicitly describe an element of $\prod_{e\in A}\ell^2(\E)^{G_e}$ that maps to $[b]$.

Let us define $\omega_e=\delta_{\overline{\~e}}-\delta_{\~e}$ for $e\in A$; observe that $\omega\in\prod_{e\in A}\ell^2(\E)^{G_e}$.

\begin{Lem}\label{omega}
 Let $b \in Z^1(G, \ell^2(\E))$ be the Haagerup cocycle with respect to the base vertex $\~v_0$. Then $\tilde \partial (\omega)= b$, with $\omega$ as above.
\end{Lem}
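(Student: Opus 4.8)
The plan is to reduce the statement to the explicit formula for the connecting map, equation (\ref{partial}), namely $\tilde\partial\omega(g)=-\int_{\~v_0}^{g\~v_0}\omega$, and then to evaluate that integral geometrically in terms of half-trees. First I would rewrite the canonical $G$-equivariant extension $\tilde\omega\in\hom(\Z(\~A),\ell^2(\E))^G$ of $\omega$ in intrinsic form. Since $\tilde\omega_{g\~e}=g\omega_e=g(\delta_{\overline{\~e}}-\delta_{\~e})=\delta_{g\overline{\~e}}-\delta_{g\~e}$ and $\overline{g\~e}=g\overline{\~e}$, one obtains the clean formula
$$\tilde\omega_f=\delta_{\overline f}-\delta_f \qquad \text{for every oriented edge } f \text{ of } \mathcal T,$$
valid not only for $f\in\~A$ but for all of $\E$, the two signs cancelling consistently according to whether $f$ or $\overline f$ lies in $\~A$.

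The heart of the argument is the identity
$$\int_{\~v_0}^{w}\omega=\1_{\~v_0}-\1_{w}\qquad(w\in\~V),$$
which I would prove by induction on the distance $d(\~v_0,w)$, using the additivity of the integral recorded in Section \ref{Prelim} (which rests on Remark \ref{epsilonAcocyle} for vertices in $\~V$). In the base case, $w$ is adjacent to $\~v_0$ and the geodesic consists of a single geometric edge; writing $h$ for its travel-direction orientation (so $h_-=\~v_0$, $h_+=w$), the combinatorial definition of $\int$ gives $\int_{\~v_0}^{w}\omega=\tilde\omega_h=\delta_{\overline h}-\delta_h$, regardless of whether the $A$-orientation agrees with $h$ or reverses it. On the other hand the only half-trees separating $\~v_0$ from $w$ are $\overline h$ (pointing towards $\~v_0$) and $h$ (pointing towards $w$), so $\1_{\~v_0}-\1_{w}=\delta_{\overline h}-\delta_h$ as well. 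For the inductive step I would split $\int_{\~v_0}^{w}\omega=\int_{\~v_0}^{w'}\omega+\int_{w'}^{w}\omega$ at the penultimate vertex $w'$, apply the base case to the final edge, and telescope $\1_{\~v_0}-\1_{w}=(\1_{\~v_0}-\1_{w'})+(\1_{w'}-\1_{w})$.

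Finally I would set $w=g\~v_0$ and combine the displayed identity with (\ref{partial}) to conclude
$$\tilde\partial\omega(g)=-\int_{\~v_0}^{g\~v_0}\omega=\1_{g\~v_0}-\1_{\~v_0}=(g-1)\cdot\1_{\~v_0}=b(g),$$
which is exactly the Haagerup cocycle with base vertex $\~v_0$. I emphasize that this yields equality of cocycles, not merely of cohomology classes, which matches the statement $\tilde\partial(\omega)=b$.

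The step I expect to be the main obstacle is the sign bookkeeping in the base case: one must reconcile the combinatorial weight $\varepsilon_{\~v_0 w}$ appearing in the definition of $\int$ with the geometric description of $\1_x$ as the indicator of half-trees pointing towards $x$. The key observation making this routine is that traversing an edge in its $A$-orientation or against it always contributes the same quantity $\delta_{\overline h}-\delta_h$ in the direction of travel, and that this is precisely the half-tree newly separating the two endpoints; once this is pinned down, everything else is formal telescoping.
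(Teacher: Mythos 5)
Your proposal is correct, and it reaches the conclusion by a genuinely different organization of the verification than the paper's, even though both ultimately rest on the same geometric identity. The paper establishes $\int_{v_0}^{v}\omega = \1_{\~v_0}-\1_{\~v}$ only along the lifted maximal tree (via the observation that $\omega|_T=df$ with $f_v=-\1_{\~v}$), then checks $\tilde\partial\omega(g)=b(g)$ only for $g$ in the generating set $\left(\sqcup_{v\in V}G_v\right)\sqcup\{t_e : e\in A\}$, using the two graph-of-groups formulas for $\tilde\partial$ from the Proposition preceding Lemma \ref{conn}; equality of the two cocycles then follows since cocycles agreeing on a generating set coincide. You instead work directly with formula (\ref{partial}) for every $g\in G$ simultaneously, which requires the stronger identity $\int_{\~v_0}^{w}\omega=\1_{\~v_0}-\1_{w}$ for an \emph{arbitrary} vertex $w\in\~V$, not just for vertices on the lifted tree; your induction on $d(\~v_0,w)$ proves exactly this, and the sign bookkeeping you flag as the main obstacle is handled correctly: whether or not the travel-direction orientation $h$ of a traversed edge lies in $\~A$, its contribution is $+\tilde\omega_h=\delta_{\overline h}-\delta_h$ or $-\tilde\omega_{\overline h}=\delta_{\overline h}-\delta_h$, the same element, and this is precisely $\1_{h_-}-\1_{h_+}$. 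What your route buys: it bypasses the Proposition and the reduction to generators entirely, so the proof is self-contained modulo (\ref{partial}) and the additivity in Remark \ref{epsilonAcocyle}, and it yields equality of cocycles on all of $G$ in one stroke. What the paper's route buys: it only ever integrates along the maximal tree, and it exercises the explicit graph-of-groups description of $\tilde\partial$ on generators, which is the form of the connecting map the authors need again later (notably in the proof of Theorem \ref{main}).
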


\begin{proof}
 Observe first that  for $e\in A$:
 $$\omega_e = \1_{{\~e}_-}-\1_{{\~e}_+}. $$
Formally $\omega|_T=df$, where $f_v=-\1_{{\~{v}}}\;(v\in V)$. So:
$$\int_{v_0}^{v} \omega =- \1_{{\~{v}}}+\1_{{\~v_0}}$$
Now, for $g_v\in G_v$ we have: 
\begin{eqnarray*}
\tilde{\partial}\omega(g_v)& = &(g_v-1)\int_{v_0}^{v} \omega\\
&=& (g_v-1)(-\1_{{\~{v}}}+\1_{{\~v_0}}) \\
&=& (g_v-1)\1_{{\~v_0}}\\
&=& b(g_v).
\end{eqnarray*}

Now, let $e\in A$; using ${\~e}_+={\~{e_+}}$ and $t_e({\~e}_-)={\~{e_-}}$: 
\begin{eqnarray*}
(\tilde\partial \omega)(t_e) &=&t_e\int_{v_0}^{e_+} \omega - \int_{v_0}^{e_-} \omega - t_e\omega_e \\
& = & t_e(- \1_{{\~{e_+}}}+ \1_{{\~v_0}}) +  \1_{{\~{e_-}}}- \1_{{\~v_0}} -t_e(\1_{{\~e}_-}-\1_{{\~e}_+})\\
& = &(t_e-1)\1_{{\~v_0}} \\
&= &b(t_e).
\end{eqnarray*}
Since $\tilde \partial \omega$ agrees with $b$ on the generators $\left(\sqcup_{v\in V} G_v  \right)\sqcup\{t_e: e\in A\}$ by Lemma \ref{conn} we have that $\tilde \partial \omega = b$.
\end{proof}

\begin{Lem}\label{watatani} Let $G$ be a topological group acting on a tree $\mathcal T$ without inversion. The following are equivalent:
\begin{enumerate}
\item[a)] $[b]=0$ in $H^1(G,\ell^2(\E))$;
\item[b)] $G$ has a fixed vertex.
\item[c)] $\Delta: H^1(G,\ell^2(\E))\rightarrow \prod_{v\in V} H^1(G_v,\ell^2(\E))$ is injective.
\end{enumerate}
 \end{Lem}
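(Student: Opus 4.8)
The plan is to prove the three equivalences by establishing $a)\Leftrightarrow b)$ directly and then $a)\Leftrightarrow c)$ using the Mayer-Vietoris machinery already developed. For $a)\Rightarrow b)$: if $[b]=0$ in $H^1(G,\ell^2(\E))$, then $b$ is a coboundary, so there exists $\xi\in\ell^2(\E)$ with $b(g)=(g-1)\xi$ for all $g\in G$. Then $\|b(g)\|$ is uniformly bounded by $2\|\xi\|$, and since $\|b(g)\|_2^2=2d(gx_0,x_0)$ as recorded in the excerpt, the orbit $G\cdot x_0$ has bounded diameter. A bounded orbit in a tree forces $G$ to fix a vertex (the center of the orbit, or equivalently the fixed point coming from the standard tree-action argument that a bounded set has a canonical center). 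This is the step I expect to be routine but worth stating carefully.

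For $b)\Rightarrow a)$: suppose $G$ fixes a vertex $x_*$. I would choose the base vertex of the Haagerup cocycle to be $x_*$ itself; since the class $[b]$ is independent of the base vertex (as noted in the excerpt), this is harmless. With base vertex $x_*$ we have $b(g)=\1_{gx_*}-\1_{x_*}=\1_{x_*}-\1_{x_*}=0$ identically, because $gx_*=x_*$ for every $g\in G$. Hence $b=0$ as a cocycle, so certainly $[b]=0$ in $H^1$. This gives the cleanest route and avoids any computation.

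For the equivalence with c), the key is Lemma \ref{omega}, which shows $[b]=\partial(\omega)$ lies in the image of the connecting map $\partial$. By exactness of the Mayer-Vietoris sequence (\ref{MayViet}), the image of $\partial$ is exactly $\ker\Delta$ inside $H^1(G,\ell^2(\E))$. Thus $[b]\in\ker\Delta$ always holds (this was already observed before Lemma \ref{omega}, since $b$ is bounded on each vertex group). The plan for $c)\Rightarrow a)$ is then immediate: if $\Delta$ is injective, then $\ker\Delta=0$, so $[b]=0$. Conversely, for $a)\Rightarrow c)$ I cannot directly conclude injectivity of $\Delta$ from $[b]=0$ alone, so I would instead route through $b)$: assuming $a)$, I get $b)$ (a fixed vertex), and a group fixing a vertex $x_*$ acts with a global fixed point, which should make $\Delta$ injective. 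The cleanest argument here is that if $G$ fixes $x_*$, then $G$ itself is (contained in) a vertex stabilizer $G_{x_*}$, so after possibly passing to the appropriate vertex of $X$ one sees that the graph of groups is essentially trivial and $\Delta$ becomes an isomorphism onto the relevant factor; equivalently, $\ker\Delta=\mathrm{image}(\partial)$ and I would argue that when $G$ fixes a vertex the map $\iota$ is onto, forcing $\mathrm{image}(\partial)=0$ by exactness.

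The main obstacle I anticipate is the last implication, $a)/b)\Rightarrow c)$, namely showing that a global fixed vertex forces $\Delta$ injective. The subtlety is that fixing a vertex in $\mathcal{T}$ means $G$ equals a conjugate of a single vertex stabilizer, which should collapse the graph-of-groups decomposition; I would want to verify that $\ker\Delta$ vanishes by showing $\iota$ is surjective in this degenerate situation (so that $\partial=0$ by exactness of (\ref{MayViet})), rather than by a direct cocycle manipulation. I would organize the proof as the cycle $a)\Rightarrow b)\Rightarrow c)\Rightarrow a)$ is not quite available in that order, so instead I would prove $a)\Leftrightarrow b)$ as a self-contained block and $a)\Leftrightarrow c)$ via the exactness identity $\ker\Delta=\mathrm{image}(\partial)\ni[b]$, handling the forward direction of $c)$ through the fixed-vertex characterization established in $b)$.
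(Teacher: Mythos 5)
Your proposal is correct and takes essentially the same route as the paper: $(a)\Leftrightarrow(b)$ via the identity $\|b(g)\|^2=2d(gx_0,x_0)$ together with the standard bounded-orbit/fixed-vertex argument, $(c)\Rightarrow(a)$ from the observation that $[b]\in\ker\Delta$, and $(b)\Rightarrow(c)$ by noting that a global fixed vertex gives $G=G_{v_0}$, so that $\Delta$ followed by projection onto that factor is the identity (your worry about this last step is unfounded; no surjectivity of $\iota$ is needed). Your base-point trick for $(b)\Rightarrow(a)$ — choosing $x_0$ to be the fixed vertex so that $b\equiv 0$ — is a slight simplification of the paper's argument, which instead deduces $[b]=0$ from boundedness of $b$.
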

 
 \begin{proof} $(a)\Leftrightarrow(b)$ Using the fact that
 \begin{eqnarray*}
 \|b(g)\|^2 &=& 2d(gx_0,x_0)
\end{eqnarray*}
we see that $[b]=0$ if and only if $b$ is bounded, if and only if $G$ admits a bounded orbit on vertices. By a standard argument (see for example \cite[Proposition 2.7, Chapter II.2]{BridsonHaefliger}), this is equivalent to $G$ fixing a vertex.

$(c)\Rightarrow (a)$ Follows immediately from the already observed fact that $[b]\in \ker\Delta$.

$(b)\Rightarrow (c)$ $G$ admits a globally fixed vertex $x_0$, so that $G=G_{v_0}$, where $v_0=\pi(x_0)$. Since $G$ is equal to a vertex stabilizer, the restriction of $\Delta$ to the corresponding factor of $\prod_v H^1(G_v,\ell^2(\mathcal{E}))$ is the identity, and $\Delta$ is injective.
\end{proof}

\begin{remark} Assume $[b]=0$. Then, by the exact sequence (\ref{MayViet}), the vector $\omega\in \prod_{e\in A}\ell^2(\E)^{G_e}$ from Lemma \ref{omega} is in the image of $\iota:\prod_{v\in V}\ell^2(\mathcal{E})^{G_v}\rightarrow \prod_{e\in A}\ell^2(\E)^{G_e}$. This can be seen explicitly as follows. Suppose $G$ fixes the vertex $x_0$ in $\mathcal{T}$, set $v_0=p(x_0)$. Then $X=G\backslash\mathcal{T}$ is a tree. Set then $f_v=\int_{v_0}^v \omega$; then $f_v\in\ell^2(\E)^{G_v}$, since $G_v$ pointwise fixes the geodesic $[x_0,{\~{v}}]$; and clearly $\iota(f)=df=\omega$ (which is essentially Poincar\'e's lemma).
\end{remark}

From Lemma \ref{watatani} and the Delorme-Guichardet Theorem, we immediately deduce the following consequence, that provides a cohomological characterization of Serre's Property (FA): 

\begin{Cor} A topological group $G$ has Serre's Property (FA) if and only, for any action of $G$ without inversion on a tree $\mathcal{T}$, the map $\Delta: H^1(G,\ell^2(\E))\rightarrow\prod_{v\in V} H^1(G_v,\ell^2(\E))$ is injective. In particular, if $G$ is locally compact with Kazhdan's Property (T), then $G$ has Serre's Property (FA)\footnote{The latter statement was first proved by Watatani \cite{Wat}.}.
\hfill$\square$
\end{Cor}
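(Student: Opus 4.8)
The plan is to reduce everything to Lemma~\ref{watatani}, which for a no-inversion action identifies injectivity of $\Delta$ with the existence of a $G$-fixed vertex. Granting that equivalence, the Corollary becomes the purely geometric assertion that $G$ has Property~(FA) if and only if every continuous action of $G$ without inversion on a tree fixes a vertex. So the real content is translating between Property~(FA) --- which quantifies over \emph{all} actions and only asserts that a vertex or an edge is preserved --- and the existence of genuinely fixed \emph{vertices} for the no-inversion actions appearing in the statement.

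For the forward implication, I would assume $G$ has Property~(FA) and let $G$ act without inversion on a tree $\mathcal{T}$. By definition Property~(FA) guarantees that $G$ either fixes a vertex or stabilizes an edge $\{v,w\}$. In the first case we are done; in the second, since the action is without inversion no element can interchange $v$ and $w$ while preserving the edge, so $G$ fixes both endpoints and hence a vertex. Either way Lemma~\ref{watatani}\,$(b)\Leftrightarrow(c)$ shows that $\Delta$ is injective.

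For the converse, suppose that $\Delta$ is injective --- equivalently, by Lemma~\ref{watatani}, that $G$ fixes a vertex --- for every no-inversion action, and let $G$ act arbitrarily (possibly with inversions) on a tree $\mathcal{T}'$. I would pass to the barycentric subdivision $\mathcal{T}$, obtained by inserting a midpoint vertex in each edge of $\mathcal{T}'$; the induced $G$-action on $\mathcal{T}$ is without inversion, since it preserves the partition of vertices into original vertices and midpoints. By hypothesis $G$ fixes a vertex of $\mathcal{T}$, which is either an original vertex of $\mathcal{T}'$ or the midpoint of an original edge; in the two cases $G$ respectively fixes a vertex or stabilizes an edge of $\mathcal{T}'$, so Property~(FA) holds. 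Finally, for the displayed special case, Property~(T) implies via the (easy half of the) Delorme--Guichardet Theorem that $H^1(G,M)=0$ for every unitary $G$-module $M$; in particular $H^1(G,\ell^2(\E))=0$ for every no-inversion action, so $\Delta$ is the zero map out of the zero space and is trivially injective, whence the equivalence just proved yields Property~(FA).

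The routine verifications --- that a no-inversion action admits no edge-swap, that the subdivision action is inversion-free, and the correspondence between vertices of $\mathcal{T}$ and vertices or edges of $\mathcal{T}'$ --- are standard Bass--Serre bookkeeping. The one genuinely delicate point, and the step I would be most careful about, is matching the two notions of fixed point: Property~(FA) permits a stabilized edge and ranges over general actions, whereas Lemma~\ref{watatani} speaks of fixed vertices under no-inversion actions. The no-swap argument in the forward direction and the subdivision trick in the converse are precisely what bridge this gap.
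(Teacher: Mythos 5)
Your proof is correct and follows exactly the route the paper intends: the paper derives the Corollary "immediately" from Lemma~\ref{watatani} together with the Delorme--Guichardet Theorem, and your argument simply fills in the implicit bookkeeping (converting a preserved edge into a fixed vertex under a no-inversion action, and using barycentric subdivision to handle actions with inversions). The only cosmetic quibble is that the implication (T)~$\Rightarrow$~$H^1=0$ is Delorme's (generally considered the harder) half of the theorem, not the easy one, but this does not affect the validity of your argument.
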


We now explore the triviality of the Haagerup cocycle in reduced cohomology.

\begin{Lem}\label{FreeF_2}
 Suppose that $\mathbb{F}_2 \leq \mathrm{Aut}(\mathcal T)$ acts freely. Then $\ell^2(\E)$ does not have $\mathbb{F}_2$-almost invariant vectors.\end{Lem}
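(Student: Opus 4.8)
The plan is to recognise the representation of $\mathbb{F}_2$ on $\ell^2(\E)$ as a multiple of the left regular representation and then to feed in the non-amenability of $\mathbb{F}_2$. The statement ``no almost invariant vectors'' I read as: there exist a finite set $F\subset\mathbb{F}_2$ and $\epsilon>0$ such that every unit vector $\xi$ satisfies $\max_{g\in F}\|\pi(g)\xi-\xi\|\ge\epsilon$ (equivalently, the trivial representation is not weakly contained).

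First I would promote the freeness on $\mathcal{T}$ to freeness on the oriented edge set $\E$. Indeed, if $g\in\mathbb{F}_2$ stabilises an oriented edge $h\in\E$, then it fixes both endpoints $h_+$ and $h_-$, hence fixes a vertex, and freeness forces $g=1$; note this argument is insensitive to whether the action inverts edges. Therefore every $\mathbb{F}_2$-orbit in $\E$ is free, i.e. $\mathbb{F}_2$-equivariantly bijective to $\mathbb{F}_2$ acting on itself by left translation. Choosing a set $R$ of orbit representatives gives a $\mathbb{F}_2$-equivariant unitary isomorphism
$$\ell^2(\E)\;\cong\;\bigoplus_{h\in R}\ell^2(\mathbb{F}_2\cdot h)\;\cong\;\bigoplus_{h\in R}\ell^2(\mathbb{F}_2),$$
so $\pi$ is a (possibly infinite) multiple of the left regular representation $\lambda$ of $\mathbb{F}_2$.

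Next I would invoke the Hulanicki--Reiter characterisation of amenability (see, e.g., \cite{BHV}): a discrete group is amenable if and only if its regular representation has almost invariant vectors. Since $\mathbb{F}_2$ is non-amenable, $\lambda$ has none; concretely one fixes $F\subset\mathbb{F}_2$ finite and $\epsilon>0$ with $\sum_{g\in F}\|\lambda(g)\eta-\eta\|^2\ge\epsilon^2\|\eta\|^2$ for all $\eta\in\ell^2(\mathbb{F}_2)$, the homogeneous form of the bound obtained by scaling from unit vectors.

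The last step is to check that this obstruction survives passage to an arbitrary (even infinite) multiple, which is the only point I expect to need an argument rather than a citation. For a unit vector $\xi=(\xi_h)_{h\in R}$ I would apply the coordinatewise bound and interchange the (positive-term) summations:
$$\sum_{g\in F}\|\pi(g)\xi-\xi\|^2=\sum_{h\in R}\sum_{g\in F}\|\lambda(g)\xi_h-\xi_h\|^2\ge\epsilon^2\sum_{h\in R}\|\xi_h\|^2=\epsilon^2,$$
whence $\max_{g\in F}\|\pi(g)\xi-\xi\|\ge\epsilon/\sqrt{|F|}$ uniformly in $\xi$, and $\ell^2(\E)$ has no $\mathbb{F}_2$-almost invariant vectors. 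The conceptual crux is thus the non-amenability input; the genuinely technical point is the averaging in this final display, which goes through for an arbitrary number of orbits precisely because the lower bound on $\lambda$ is homogeneous in $\eta$ and hence stable under summation over $R$.
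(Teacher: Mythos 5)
Your proposal is correct and follows essentially the same route as the paper: decompose $\ell^2(\E)$ into free $\mathbb{F}_2$-orbits, identify the representation with a multiple of the left regular representation, and invoke non-amenability of $\mathbb{F}_2$. The two points you elaborate --- promoting freeness on vertices to freeness on oriented edges, and verifying that the spectral gap of $\lambda$ survives an arbitrary direct sum --- are exactly the details the paper leaves implicit, and your treatment of them is sound.
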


\begin{proof} Let $C$ denote a choice of one representative in each $\mathbb{F}_2$-orbit in $\mathcal{E}$. Since the $\mathbb{F}_2$-action is free, this choice identifies the $\ell^2(\mathcal{E}) \cong  \underset{c\in C}{\oplus}\ell^2(\mathbb{F}_2)$, as an $\mathbb{F}_2$-module, where the direct sum is endowed with the diagonal left regular representation of $\mathbb{F}_2$. So the result follows from the observation that $\ell^2(\mathbb{F}_2)$ does not have $\mathbb{F}_2$-almost invariant vectors, which is guaranteed by the non-amenability of $\mathbb{F}_2$.
\end{proof}

\begin{Def}
 A group $G$ acting on a tree $\mathcal T$ is said to be elementary if it has a finite orbit in $\mathcal T$ or $\partial \mathcal T$. 
\end{Def}

If one has a finite orbit in $\partial \mathcal T$ but not in $\mathcal T$, then the orbit must have size at most 2. This follows for example from Propositions 1 and 2 of \cite{PaysValette} along with the classification of isometries. 

We now recall a little about the structure of the stabilizers $Aut(\mathcal T)_{\xi_0}$ and $Aut(\mathcal T)_{\{\xi_0, \xi_1\}}$ where $\xi_0, \xi_1\in \partial \mathcal T$. The group $Aut(\mathcal T)_{\xi_0}$ contains the collection of its elliptic elements as a normal subgroup  $R_{\xi_0} = \cup_{v\in \mathcal T} stab[v, \xi_0)$. 
The map that associates to each element in $Aut(\mathcal T)_{\xi_0}$ its signed translation length is a homomorphism to $\mathbb{Z}$ with kernel $R_{\xi_0}$, see \cite[Lemme 4]{PaysValette}; we call it the {\it Busemann homomorphism}.  Choosing $a\in Aut(\mathcal T)_{\xi_0}$ a hyperbolic element with minimal translation length (or setting $a=1$ otherwise) describes  an isomorphism $$Aut(\mathcal T)_{\xi_0} \cong \<a\>\ltimes R_{\xi_0}.$$
This provides $Aut(\mathcal T)_{\xi_0}$ with a normal form, i.e. for each $g\in Aut(\mathcal T)_{\xi_0}$ there is a unique $n\in \Z$ and $r\in R_{\xi_0}$ such that $g=a^n r$.

Next, consider $G=Aut(\mathcal T)_{\{\xi_0, \xi_1\}}$; observe that it contains, as a subgroup of index at most two, $G_0=Aut(\mathcal T)_{\xi_0}\cap Aut(\mathcal T)_{\xi_1}$ and $G_0\cong \<a\> \ltimes (R_{\xi_0}\cap R_{\xi_1})$. 

Finally, we observe that these descriptions and canonical forms hold by restriction to any subgroup or $Aut(\mathcal T)_{\xi_0}$ or $Aut(\mathcal T)_{\{\xi_0, \xi_1\}}$.

To simplify notation which will quickly become cumbersome, let $$\mathbbm{2}_{x,y} = \1_{[x,y]}-\1_{[y,x]}.$$ The fact that $\|\mathbbm{2}_{x,y}\|^2 = 2d(x,y)$ should give the reader an idea of why the notation was chosen this way. Observe that if $b$ is the Haagerup cocycle with respect to base point $x_0$ then $$b(g) = \mathbbm{2}_{x_0,gx_0}.$$

\begin{Thm}\label{Haagerupelementary}
Let $G$ be a discrete group acting on $\mathcal T$ without inversion. Let $b\in Z^1(G, \ell^2(\E))$ denote the Haagerup cocycle. Then $[b]$ is trivial in $\overline H^1(G, \ell^2(\E))$ if and only if the $G$-action is elementary.
\end{Thm}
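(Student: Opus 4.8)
The plan is to prove both implications by moving between reduced-cohomological triviality of $b$ and the existence of almost-invariant or almost-fixed vectors for the affine isometric action $\alpha_g(v)=\pi(g)v+b(g)$ on $\ell^2(\E)$, where $\pi$ is the permutation representation of $G$ on oriented edges. Recall that $[b]=0$ in $\overline H^1(G,\ell^2(\E))$ is equivalent to the existence of a sequence $(\zeta_n)$ in $\ell^2(\E)$ with $\|\alpha_g\zeta_n-\zeta_n\|\to 0$ for every $g\in G$, i.e. to almost-fixed points for $\alpha$; I will use this reformulation throughout. The formal fixed point of $\alpha$ is $-\1_{x_0}$, which fails to lie in $\ell^2(\E)$ precisely because $\1_{x_0}$ is not square-summable, and the whole difficulty is to approximate it by genuine $\ell^2$-vectors.

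For the direction \emph{elementary $\Rightarrow [b]=0$}, I first dispose of a finite orbit in $\mathcal T$: taking the circumcenter of such an orbit and using that the action is without inversion, $G$ fixes a vertex, so $[b]=0$ already in $H^1(G,\ell^2(\E))$ by Lemma \ref{watatani}. The essential case is a fixed end $\xi_0$, where $G\le \langle a\rangle\ltimes R_{\xi_0}$. Fixing the geodesic ray $(x_0,x_1,x_2,\dots)$ toward $\xi_0$, I set
$$\zeta_N=\frac1N\sum_{j=1}^N\big(\1_{x_j}-\1_{x_0}\big)\in\ell^2(\E).$$
A direct computation using $\alpha_g v-v=(\pi(g)-1)v+b(g)$ together with $\1_{x_j}-\1_{x_0}=\mathbbm{2}_{x_0,x_j}$ collapses $\alpha_g\zeta_N-\zeta_N$ to $\frac1N\sum_{j=1}^N(\1_{gx_j}-\1_{x_j})$. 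For $g$ elliptic fixing $\xi_0$ this sum is eventually constant in $j$ (since $gx_j=x_j$ once $x_j$ lies on the ray fixed by $g$), while for $g$ hyperbolic toward $\xi_0$ the Busemann homomorphism makes $g$ act as a shift by its translation length along the ray, so the sum telescopes and its norm grows only like $O(\sqrt N)$. In either case $\|\alpha_g\zeta_N-\zeta_N\|=O(N^{-1/2})\to 0$, and this single sequence works simultaneously for all $g$. A finite orbit of size two in $\partial\mathcal T$ is reduced to the fixed-end case by applying the above to the index-$\le 2$ subgroup $G_0$ fixing both ends and then averaging the resulting almost-fixed points over a transversal, which yields almost-fixed points for $G$.

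For the converse I argue by contraposition. If the action is non-elementary, then by the classification of isometric tree actions (Propositions 1 and 2 of \cite{PaysValette} together with the induced ping-pong) $G$ contains a subgroup $H\cong\mathbb F_2$ acting freely, every nontrivial element of which is hyperbolic, so that $\|b(h)\|^2=2d(hx_0,x_0)$ is unbounded on $H$. Suppose $[b]=0$ in $\overline H^1(G,\ell^2(\E))$; since restriction sends closures of coboundaries to closures of coboundaries, $[b|_H]=0$ in $\overline H^1(H,\ell^2(\E))$, producing almost-fixed points $(\zeta_n)$ for the affine $H$-action. These cannot stay bounded: a bounded subsequence would converge weakly to a genuine fixed point, making $b|_H$ a coboundary and forcing $H$ to fix a vertex by Lemma \ref{watatani}, contradicting freeness. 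Hence $\|\zeta_n\|\to\infty$, and the normalized vectors $v_n=\zeta_n/\|\zeta_n\|$ satisfy $\|\pi(h)v_n-v_n\|\le(\|\alpha_h\zeta_n-\zeta_n\|+\|b(h)\|)/\|\zeta_n\|\to 0$, so $\ell^2(\E)$ has $H$-almost-invariant vectors, contradicting Lemma \ref{FreeF_2}.

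The main obstacle is the fixed-end case of the forward direction. The naive approximants $\1_{x_n}-\1_{x_0}$ (truncations of the formal fixed point) fail, since for hyperbolic $g$ they leave $\|\alpha_g\zeta-\zeta\|$ bounded below by the translation length, so ordinary coboundary approximation is genuinely impossible. The Cesàro average across the ray is exactly what kills the hyperbolic contribution, and the delicate point is to verify by the telescoping estimate that this one sequence defeats elliptic and hyperbolic elements at once; securing the growth bound $\|\sum_{j=1}^N(\1_{gx_j}-\1_{x_j})\|=O(\sqrt N)$, rather than $O(N)$, is precisely what makes the normalized differences vanish.
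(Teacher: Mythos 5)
Your proof is correct, and its global architecture is the same as the paper's: the same case split (finite orbit in $\mathcal{T}$ handled via Lemma \ref{watatani}; a fixed end; an invariant pair of ends handled by averaging almost-fixed points over a transversal of the index-two subgroup, which is exactly the paper's Lemma \ref{injective}), and the same obstruction in the non-elementary direction (a freely acting $\mathbb{F}_2$ together with Lemma \ref{FreeF_2}). The differences are in execution, and they are worth recording. In the non-elementary direction the paper invokes Guichardet's theorem for $G$ itself (no almost invariant vectors implies $\overline{H}^1=H^1$) and then only needs $b$ to be unbounded; you restrict to $H\cong\mathbb{F}_2$ and re-derive that implication by hand (a bounded subsequence of almost-fixed points would yield a weak-limit fixed point, while unbounded ones normalize to almost-invariant vectors) --- the same mathematics, inlined and self-contained. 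In the fixed-end case your Cesàro vectors are the paper's tent-weighted vectors in disguise: summation by parts gives
$$\zeta_N=\frac1N\sum_{j=1}^{N}\bigl(\1_{x_j}-\1_{x_0}\bigr)=\sum_{k=0}^{N-1}\Bigl(1-\frac{k}{N}\Bigr)\mathbbm{2}_{x_k,x_{k+1}},$$
which is $-v_N$ in the paper's notation (with consecutive ray vertices in place of the $a$-orbit along the axis). What your organization buys is uniformity: the collapse identity $\alpha_g\zeta_N-\zeta_N=\frac1N\sum_{j=1}^N(\1_{gx_j}-\1_{x_j})$, combined with the fact that any $g$ fixing $\xi_0$ eventually acts on the ray as a shift by its Busemann value, makes a single sequence work pointwise for every $g\in G$ at once, with one fixed base point. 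The paper instead needs the normal form $a^Nr$, a base point re-chosen for each finite set (a common fixed point of the elliptic parts $F_0$), an explicit orthogonality computation, and a final translation trick $a^NK\subset F$ to reach arbitrary finite sets; in exchange it gets the quantitative bound $\|b(a^Nr)-b_n(a^Nr)\|^2\le 6\ell(a)M^2/n$. One pedantic point: your opening equivalence ($[b]=0$ in $\overline{H}^1$ iff a sequence of almost-fixed vectors exists) implicitly uses countability of the group, but your argument does not actually depend on it, since the forward direction produces one sequence valid for all $g$, and the converse only uses the equivalence after restricting to the countable subgroup $H$.
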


\begin{proof}
 
Assume the $G$-action is non-elementary. Then, by \cite{PaysValette} there exists a freely acting $F_2\leq G$. By Lemma \ref{FreeF_2}, $\ell^2(\mathcal{E})$ has no $F_2$-almost invariant vectors, hence no $G$-almost invariant vectors. By Guichardet's result: $\overline{H}^1(G,\ell^2(\mathcal{E}))=H^1(G,\ell^2(\mathcal{E}))$; so it is enough to show that $[b]\neq 0$ in $H^1(G,\ell^2(\mathcal{E}))$, i.e. that $b$ is unbounded on $G$. But $b$ is already unbounded on $F_2$, as it acts freely.

Conversely, suppose that the action is elementary. If there is a finite orbit  in $\mathcal{T}$, then by Lemma \ref{watatani}, $[b]$ is trivial in $H^1(G, \ell^2(\E))$ and hence in $\overline H^1(G, \ell^2(\E))$.

Therefore, assume that $G$ does not have a finite orbit in $\mathcal T$. Then, either $G$ has a fixed point in $ \partial \mathcal T$ or there is a $G$-invariant set $\{\xi_0,\xi_1\}\subset \partial \mathcal T$ such that $G_0: = G\cap Aut(\mathcal T)_{\xi_0}\cap Aut(\mathcal T)_{\xi_1}$  has index 2 in $G$.

\noindent
\textbf{Case 1: \ } $G$ fixes $\xi_0$. 

 Chosing a hyperbolic isometry $a\in G$ of minimal translation length $\ell(a)$, every element of $G$ may be described uniquely as $a^N r$ for $N\in \Z$ and $r\in R_{\xi_0}\cap G$. Replacing $a$ by $a^{-1}$ if necessary, we may assume that $\xi_0$ is a contracting fixed point for $a$.

 Let $F\subset G$ be a finite set. Then $F\subset \{a^{N}r: M'\leq N\leq M, r\in F_0\}$ where $F_0$ is a finite subset of $R_{\xi_0}$, with $1\in F_0$. We begin by considering the case where   $F= \{a^Nr: 0\leq N\leq M, r\in F_0\}$.

 Let $A$ be the axis of $a$. The elements of the finite set $F_0\subset R_{\xi_0}$
  
 must have a common fixed point $t$ which allows us to choose $x_0 \in [t,\xi_0)\cap A$, that we take as base-point for the Haagerup cocycle.

To simplify notation, let $x_n = a^nx_0$ for $n\in \Z$ and observe that if $\ell(a)$ is the translation length of $a$ then $\|\mathbbm{2}_{x_k,x_{k+1}}
\|^2 = 2\ell(a)$. With this, we have that, for $N\in \Z$ and $r\in F_0$ $$b(a^Nr) =\mathbbm{2}_{x_0,x_{N}} 
.$$

Let $v_n= -\overset{n}{\Sum{k = 0}} (1-\frac{k}{n})\mathbbm{2}_{x_k,x_{k+1}}
$ and $b_n(g) = gv_n - v_n$. If $0\leq N\leq M$, $n>M$, $r\in F_0$ then

\begin{eqnarray*}
b_n(a^Nr) 
&=& -\overset{n+N}{\Sum{k = N}} (1-\frac{k-N}{n})\mathbbm{2}_{x_k,x_{k+1}}
+ \overset{n}{\Sum{k = 0}} (1-\frac{k}{n})\mathbbm{2}_{x_k,x_{k+1}} 
\\
&=& \overset{N-1}{\Sum{k = 0}} (1-\frac{k}{n})\mathbbm{2}_{x_k,x_{k+1}} 
-\overset{n}{\Sum{k = N}} \frac{N}{n}\mathbbm{2}_{x_k,x_{k+1}} 
\\
&& -\overset{n+N}{\Sum{k = n+1}} (1-\frac{k-N}{n})\mathbbm{2}_{x_k,x_{k+1}}
\end{eqnarray*}

So, observing that $b(a^Nr) = \overset{N-1}{\Sum{k=0}}\mathbbm{2}_{x_k,x_{k+1}}
$ and that $\mathbbm{2}_{x_k,x_k+1}$ is orthogonal to $\mathbbm{2}_{x_{k'},x_{k'+1}}$ for $k\neq k'$:

\begin{eqnarray*}
\|b(a^{N}r) - b_n(a^Nr)\|^2  
&= & \|  \overset{N-1}{\Sum{k=0}}\mathbbm{2}_{x_k,x_{k+1}} -
\overset{N-1}{\Sum{k = 0}} (1-\frac{k}{n})\mathbbm{2}_{x_k,x_{k+1}} 
\\
&&\qquad
+\overset{n}{\Sum{k = N}} \frac{N}{n}\mathbbm{2}_{x_k,x_{k+1}} 
+\overset{n+N}{\Sum{k = n+1}} (1-\frac{k-N}{n})\mathbbm{2}_{x_k,x_{k+1}}\|^2 \\
&=& \| \overset{N-1}{\Sum{k=0}}\frac{k}{n}\mathbbm{2}_{x_k,x_{k+1}} +\frac{N}{n} \overset{n}{\Sum{k=N}}\mathbbm{2}_{x_k,x_{k+1}} +   \overset{n+N}{\Sum{k=n+1}}(1-\frac{k-N}{n})\mathbbm{2}_{x_k,x_{k+1}} \|^2\\
&=&2\ell(a)\left[ \frac{1}{n^2}\overset{N-1}{\Sum{k=1}}k^2 + \frac{N^2}{n^2}(n-N+1) + \frac{1}{n^2}\overset{n+N}{\Sum{k=n+1}}(n+N-k)^2\right]\\
&\leq& \frac{2\ell(a)}{n^2}\left(M^3 + M^2n +M^3\right)\\
&\leq&\frac{6\ell(a)M^2}{n}
 \end{eqnarray*}

This part is concluded by observing that, if $K$ is an arbitrary finite subset of $G$, then for $N\gg 0$, the set $a^N K$ is contained in a finite set $F$ of the above form. Defining the 1-cocycle $c_n$ as $c_n=b-b_n$, we then have, for $h\in F$:
$$c_n(a^{-N}h)=a^{-N}(c_n(h)-c_n(a^{N})),$$
so by the triangle inequality: $  \|c_n(g)\|\leq \sqrt{\frac{12\ell(a)}{n}}M\underset{n\to \infty}{\longrightarrow}0$ for every $g\in K$.

\noindent
\textbf{Case 2: \ } $G$ does not have a fixed point in $\partial\mathcal T$ but preserves $\{\xi_0, \xi_1\}\subset \partial \mathcal T$.

Let $G_0 = G\cap Aut(\mathcal T)_{\xi_0}\cap Aut(\mathcal T)_{\xi_1} $ and observe that $G_0$ has index 2 in $G$. By the first case, $[b]$ is trivial in $\overline{H}^1(G_0, \ell^2(\E))$. By lemma \ref{injective} just below, $[b]$ is also trivial in $\overline{H}^1(G, \ell^2(\E))$

\end{proof}

\begin{Lem}\label{injective} Let $H$ be a finite index subgroup in the discrete group $G$. For any unitary $G$-module $M$, the restriction map $Rest_G^H:\overline{H}^1(G,M)\rightarrow \overline{H}^1(H,M)$ is injective.
\end{Lem}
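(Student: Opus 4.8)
The plan is to prove that the restriction map $Rest_G^H:\overline{H}^1(G,M)\rightarrow \overline{H}^1(H,M)$ is injective by constructing an explicit transfer (corestriction) map that inverts it up to a scalar. Since $[G:H]=n<\infty$, fix coset representatives $g_1,\dots,g_n$ with $G=\sqcup_i g_iH$. Given a $1$-cocycle $c\in Z^1(G,M)$ whose restriction $c|_H$ is a coboundary \emph{in reduced cohomology}, i.e. lies in $\overline{B^1(H,M)}$, I want to show $c$ itself is in $\overline{B^1(G,M)}$. The natural tool is the averaging/transfer argument: I would first prove the cleaner statement that $Rest_G^H$ is injective on ordinary cohomology and then argue that the transfer is continuous for the topology of uniform convergence on compact sets, so that it descends to reduced cohomology and carries $\overline{B^1(H,M)}$ into $\overline{B^1(G,M)}$.

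Concretely, I would proceed as follows. Suppose $c|_H=\partial_M\xi$ is a genuine coboundary on $H$, meaning $c(h)=(h-1)\xi$ for all $h\in H$ and some $\xi\in M$. Define the averaged vector $\eta=\frac{1}{n}\sum_{i=1}^n g_i\xi$ and compute $(g-1)\eta$, using the cocycle identity $c(gg_i)=c(g)+g\,c(g_i)$ and the fact that left-multiplication by $g$ permutes the cosets $g_iH$ so that $gg_i=g_{\rho(i)}h_i$ for some permutation $\rho$ and $h_i\in H$. A standard manipulation then shows that $c(g)-(g-1)\eta$ equals $\frac{1}{n}\sum_i\big(\text{terms involving }c(g_i)\text{ and }c(g_{\rho(i)})\big)$, and the cocycle relation on $H$ together with $c(h_i)=(h_i-1)\xi$ makes the whole expression collapse, yielding $c(g)=(g-1)\eta$; hence $c$ is a coboundary on $G$. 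This establishes injectivity of $Rest_G^H$ on $H^1$.

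The transition to reduced cohomology is where the real work lies, and this is the step I expect to be the main obstacle. In the reduced setting $c|_H$ is only a \emph{limit} of coboundaries: there is a sequence $\xi_k\in M$ with $(h-1)\xi_k\to c(h)$ uniformly on compact subsets of $H$. I would apply the averaging construction to each $\xi_k$, forming $\eta_k=\frac{1}{n}\sum_i g_i\xi_k$, and show that $(g-1)\eta_k\to c(g)$ uniformly on compact subsets of $G$. The key point is that a compact set $K\subset G$, together with the finitely many coset representatives, produces a compact set $K'\subset H$ (essentially $\{g_{\rho(i)}^{-1}gg_i\}\cap H$ as $g$ ranges over $K$) on which control of $(h-1)\xi_k-c(h)$ transfers back to control of $(g-1)\eta_k-c(g)$ on $K$; the finiteness of the index guarantees $K'$ is indeed compact and that the finite averaging does not destroy the uniform convergence. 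Care must be taken that the cross-terms appearing in the averaging are uniformly small on $K$, which follows because they are finite linear combinations of the quantities $(h-1)\xi_k-c(h)$ evaluated at points of the fixed compact set $K'$. Concluding, $c\in\overline{B^1(G,M)}$, so $[c]=0$ in $\overline{H}^1(G,M)$, which is exactly injectivity of $Rest_G^H$ in reduced cohomology.
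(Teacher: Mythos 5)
Your strategy --- averaging over coset representatives and checking that the averaging survives limits of coboundaries --- is exactly the strategy of the paper's proof; the paper packages it via the affine action $\alpha(g)v=gv+c(g)$ and takes $w_k=\frac{1}{N}\sum_i\alpha(g_i)v_k$. However, your central identity is false as stated: averaging the \emph{linear} translates, $\eta=\frac{1}{n}\sum_i g_i\xi$, does not give $c(g)=(g-1)\eta$, and the claimed collapse does not occur. Carrying out the manipulation you describe (with $gg_i=g_{\rho(i)}h_i$ and $h_i\xi=\xi+c(h_i)$) yields
\[
(g-1)\eta=\frac{1}{n}\sum_{i=1}^n g_{\rho(i)}c(h_i)=\frac{1}{n}\sum_{i=1}^n\bigl(c(g)+g\,c(g_i)-c(g_{\rho(i)})\bigr)=c(g)+(g-1)\mu,\qquad \mu:=\frac{1}{n}\sum_{i=1}^n c(g_i),
\]
so $c(g)-(g-1)\eta=-(g-1)\mu$, which is nonzero in general. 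A concrete counterexample: $G=\Z/2=\langle t\rangle$ acting on $\C$ by the sign character, $H=\{1\}$, $c(t)=1$, $\xi=0$; then $\eta=0$ while $c\neq 0$. The same defect propagates to your reduced-cohomology step: with $\eta_k=\frac{1}{n}\sum_i g_i\xi_k$ one gets $(g-1)\eta_k\to c(g)+(g-1)\mu$, not $c(g)$.

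The repair is one line, and it turns your argument into the paper's: the correct vector to average is the \emph{affine} translate, i.e. replace $\eta$ (resp. $\eta_k$) by $\eta-\mu=\frac{1}{n}\sum_i\bigl(g_i\xi-c(g_i)\bigr)$ (resp. $\eta_k-\mu$); this is precisely the paper's $w_k=\frac{1}{N}\sum_i\alpha(g_i)v_k$ up to sign conventions. Since the constant $\mu$ is independent of $g$ and of $k$, and the discrepancy $(g-1)\mu$ is the coboundary of a single fixed vector, your conclusion does survive the correction: $(g-1)(\eta_k-\mu)\to c(g)$ pointwise, hence uniformly on compact (= finite, as $G$ is discrete) subsets, the elements $h_i=g_{\rho(i)}^{-1}gg_i$ ranging over a fixed finite subset of $H$ as $g$ ranges over a finite set --- that part of your bookkeeping is fine. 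So: right idea, essentially the paper's proof, but the key formula must be stated for the affine orbit average rather than the linear one; as written, the assertion ``$c(g)=(g-1)\eta$'' and the assertion ``$(g-1)\eta_k\to c(g)$'' are both false.
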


\begin{proof} Let $g_1,...,g_N$ be representatives for the left cosets of $H$ in $G$. Let $b\in Z^1(G,M)$ be a 1-cocycle such that $b|_H$ is a limit of coboundaries. We must show that $b$ is a limit of coboundaries. Passing to the associated affine action $\alpha(g)v=gv + b(g)\;(g\in G,v\in M)$: under the assumption that there is a sequence $(v_k)_{k>0}\in M$ such that $\lim_{k\rightarrow\infty}\|\alpha(h)v_k - v_k\|=0$ for every $h\in H$, we must shot the existence of a sequence $(w_k)_{k>0}\in M$ such that $\lim_{k\rightarrow\infty}\|\alpha(g)w_k - w_k\|=0$ for every $g\in G$. So, fix $g\in G$. There exists a permutation $\sigma$ of $\{1,2,...,N\}$ and elements $h_1,...,h_N \in H$ such that $gg_i=g_{\sigma(i)}h_i$ for every $i=1,...,N$. Set $w_k=\frac{1}{N}\sum_{i=1}^N \alpha(g_i)v_k$. Then, using that $\alpha(s)x-\alpha(s)y=s(x-y)$ for every $s\in G,\;x,y\in M$:
$$\alpha(g)w_k-w_k=\frac{1}{N}(\sum_{i=1}^N \alpha(gg_i)v_k)-w_k=\frac{1}{N}\sum_{i=1}^N\alpha(g_{\sigma(i)}h_i)v_k - \frac{1}{N}\sum_{i=1}^N \alpha(g_{\sigma(i)})v_k$$
$$=\frac{1}{N}\sum_{i=1}^N g_{\sigma(i)}(\alpha(h_i)v_k-v_k).$$
Since $\lim_{k\rightarrow\infty}\|\alpha(h_i)v_k - v_k\|=0$ for $i=1,...,N$, we deduce that $\lim_{k\rightarrow\infty}\|\alpha(g)w_k - w_k\|=0$.
\end{proof}
 
 \section{Proof of Theorem \ref{main} and one application}\label{PfMainTh}
 
 \subsection{Proof of Theorem \ref{main}(ii)}
 
We first assume that $\iota:\prod_{v\in V}M^{G_v}\rightarrow\prod_{e\in A}
 M^{G_e}$ has dense image. Now, observing the explicit formula for $\tilde{\partial}$ we see that $\partial:\prod_{e\in A}M^{G_e}\rightarrow H^1(G,M)$ is continuous for the product topology. Furthermore, the assumption of Property (T) for the vertex groups implies that $H^1(G_v,M)= 0$  for each $v$. This means that $\partial$ is onto, by the Mayer-Vietoris sequence $(\ref{MayViet})$. We therefore have that $\tilde\partial:\prod_{e\in A}M^{G_e}\rightarrow Z^1(G,M)$ is onto. Furthermore, $\im(\iota)= \ker(\partial)$ is dense in $\prod_{e\in A}M^{G_e}$, which means $\tilde\partial|_{\ker(\partial)} $ has dense image in $B^1(G,M)$. This of course means that $B^1(G,M)$ is dense in $Z^1(G,M)$ and hence $\overline{H}^1(G,M)=0$.

 Conversely, assume that $\overline{H}^1(G,M)=0$. Continuing to assume that all vertex groups have Property (T), the Mayer-Vietoris sequence yields that $\tilde\partial: \prod_{e\in A}M^{G_e} \to Z^1(G,M)$ is onto and that $\im(\iota) =\ker(\partial)$. Therefore, choosing $\omega\in \prod_{e\in A}M^{G_e}$, we must show that $\omega$ can be approximated by elements in the image of $\iota$. By assumption, there exists a sequence $(m_k)_{k\geq 1}$ of vectors in $M$ such that $\tilde{\partial}\omega(g)=\lim_{k\rightarrow\infty}gm_k-m_k$. By definition of $\tilde{\partial}\omega|_{G_v}$, the sequence $(-m_k + \int_{v_0}^v \omega)_{k\geq 1}$ is almost $G_v$-invariant, for all $v\in V$. Denote by $P_v$ the orthogonal projection of $M$ onto $M^{G_v}$, and define $f_k\in\prod_{v\in V} M^{G_v}$ by $(f_k)_v=P_v(-m_k + \int_{v_0}^v \omega)$ (for $v\in V$). 
 
  {\bf Claim:} $\lim_{k\rightarrow\infty}\|(f_k)_v-(-m_k + \int_{v_0}^v \omega)\|=0$ for every $v\in V$.
 
 Suppose not, for some $v\in V$. Passing to a subsequence, we may assume that $\|(f_k)_v-(-m_k + \int_{v_0}^v \omega)\|$ is bounded below by a positive constant. But $(f_k)_v-(-m_k + \int_{v_0}^v \omega)$ belongs to the orthogonal complement $(M^{G_v})^\perp$. So the sequence $\left(\frac{(f_k)_v-(-m_k + \int_{v_0}^v \omega)}{\|(f_k)_v-(-m_k + \int_{v_0}^v \omega)\|}\right)_{k\geq 1}$ is an almost $G_v$-invariant sequence of unit vectors in  $(M^{G_v})^\perp$, which clearly has no non-zero $G_v$-invariant vector. This contradicts Property (T) for $G_v$, establishing the claim.
 
 \medskip
 The proof of the theorem is then finished by showing that $\omega=\lim_{k\rightarrow\infty} \iota(f_k)$. But, for $e\in A$:
 \begin{eqnarray*}
\omega_e-\iota(f_k)_e&=&\omega_e-(f_k)_{e_+}+t_e^{-1}(f_k)_{e_-} \\
 &=&\omega_e-(-m_k+\int_{v_0}^{e_+}\omega) + t_e^{-1}(-m_k+\int_{v_0}^{e_-}\omega)\\
&&+[(-m_k+\int_{v_0}^{e_+}\omega) -(f_k)_{e_+}]+t_e^{-1}[(f_k)_{e_-}-(-m_k+\int_{v_0}^{e_-}\omega)].
\end{eqnarray*}

 By the Claim, the two terms in brackets go to 0 for $k\rightarrow\infty$. It remains to show that $\lim_{k\rightarrow\infty}\|\omega_e-(-m_k+\int_{v_0}^{e_+}\omega) + t_e^{-1}(-m_k+\int_{v_0}^{e_-}\omega)\|=0$. But 

\begin{eqnarray*}
\omega_e-(-m_k+\int_{v_0}^{e_+}\omega) &+& t_e^{-1}(-m_k+\int_{v_0}^{e_-}\omega)\\
&=& \omega_e-t_e^{-1}(m_k-t_em_k)+t_e^{-1}(\int_{v_0}^{e_-}\omega)-\int_{v_0}^{e_+}\omega\\
&=& \omega_e -t_e^{-1}(m_k-t_em_k) - t_e^{-1}(\tilde\partial \omega(t_e)) -\omega_e\\
&=& -t_e^{-1}[\tilde\partial\omega(t_e) + (m_k -t_em_k)]\to 0
\end{eqnarray*}
where the last line converges to 0 by assumption. This concludes the proof.
 \hfill$\square$
 


 




  \subsection{The case of HNN-extensions}\label{HNN}

Let $G=HNN(\Gamma,A,\theta)$ be an HNN-extension, where $A$ is a subgroup of $\Gamma$ and $\theta:A\rightarrow\Gamma$ is a monomorphism.
Recall from \cite{Serre} that $G$ can be seen as the fundamental group of a graph of groups with one vertex, with group $\Gamma$, and one 
edge, with group $A$. Let $t$ be the stable letter in $G$ corresponding to the unique edge, satisfying $tat^{-1}=\theta(a)$, for
every $a\in A$. If $A=\Gamma$ and $\theta$ is an automorphism of $\Gamma$, then $G$ is the semi-direct product $\Gamma\rtimes_{\theta}\mathbb{Z}$.

The map $\iota:M^{\Gamma}\rightarrow M^A$ is given by $m\mapsto (1-t^{-1})m$.

\medskip

\begin{proof}[Proof of Corollary \ref{semidir}] First observe that $Ker (1-t)\cap M^{\Gamma} = 
Ker (1-t^{-1})\cap M^{\Gamma}=M^G$, as $\Gamma \cup \{t\}$ generates $G$. In other words, 1 is not an eigenvalue of $t|_{M^{\Gamma}}$
if and only if $M^G=0$.

{\bf Claim:} Let $M$ be a unitary $G$-module; if $M^G\neq 0$, then $\overline{H}^1(G,M)\neq 0$ (in particular, $H^1(G,M)\neq 0$). 

To see it, let $M^\perp$ be the orthogonal of $M^G$ in $M$; from the decomposition $M=M^G\oplus M^\perp$ we get a decomposition $\overline{H}^1(G,M)=\overline{H}^1(G,M^G)\oplus\overline{H}^1(G,M^\perp)$, and it is enough to check that $\overline{H}^1(G,M^G)\neq 0$. But $\overline{H}^1(G,M^G)=H^1(G,M^G)=\hom(G,M^G)$, which is non-zero as $G$ maps onto $\mathbb{Z}$.

We may now prove the first statement of Corollary \ref{semidir}. Assume first that $H^1(G,M)=0$. By Theorem 1, the map 
$(1-t^{-1})|_{M^{\Gamma}}$ is then onto.
By the previous claim, it follows that $M^G= 0$ and so $\iota = (1-t^{-1})|_{M^{\Gamma}}$ is also injective and therefore invertible, meaning that 1 is not a spectral value of $t|_{M^\Gamma}$. 
Conversely, if 1 is not a spectral value of $t|_{M^\Gamma}$, then $(1-t^{-1})|_{M^{\Gamma}}$ is invertible, in particular it is onto, so 
$H^1(G,M)=0$.
\medskip

We now pass to the second statement of Corollary \ref{semidir}. If $\overline{H}^1(G,M)=0$, then by the claim, 1 is not an eigenvalue of $t|_{M^\Gamma}$.
Conversely, if 1 is not an eigenvalue, then $(Im((1-t^{-1})|_{M^{\Gamma}}))^{\perp}=Ker((1-t^{-1})|_{M^{\Gamma}})=0$, i.e. $Im((1-t^{-1})|_{M^{\Gamma}})$
is dense, so $\overline{H}^1(G,M)=0$ by Theorem \ref{main}.
\end{proof}


 \section{The first $\ell^2$-Betti number}\label{SectL2Betti}
 

\subsection{Computing $\ell^2$-Betti numbers}


Let $G$ be a countably infinite group acting without inversion and co-compactly on a tree ${\cal T}$, with quotient graph $X=(V,E)=G\backslash{\cal T}$.  

Let $EG$ be a contractible CW-complex endowed with a proper, free $G$-action. For a $G$-CW-complex $Z$, we may define $$\overline{H}^i_{(2)}(Z;G):=\overline{H}^i_{(2)}(Z\times EG;G)$$ (see \cite[Proposition 2.2]{Cheeger-Gromov}) using the fact that the action of $G$ on $Z\times EG$ is now free. We denote by $\beta^i(G):=\dim_G\overline{H}^i_{(2)}(EG,G)$ the $i$-th $L^2$-Betti number. 

Let $Y$ be the geometric realization of ${\cal T}$, so that $Y$ is a contractible, 1-dimensional CW-complex. Let $Y'$ be the set of vertices of ${\cal T}$, viewed as a sub-complex of $Y$. Recall that the relative $L^2$-cohomology $\overline{H}^i_{(2)}(Y,Y';G)$ is the cohomology of the complex $$C^*_{(2)}(Y,Y';G):=\ker[C^*_{(2)}(Y\times EG)\stackrel{\mbox{Rest}}{\longrightarrow} C^*_{(2)}(Y'\times EG)].$$ 

\begin{Lem}\label{computL2}
\begin{itemize}
\item[i)] $\dim_G \overline{H}^i_{(2)}(Y;G)=\beta^i(G)$ for $i\geq 0$.
\item[ii)] $\dim_G \overline{H}^i_{(2)}(Y',G)=\sum_{v\in V}\beta^i(G_v)$ for $i\geq 0$.
\item[iii)] $\overline{H}^0_{(2)}(Y,Y';G)=0$ and $\dim_G \overline{H}^i_{(2)}(Y,Y';G)=\sum_{e\in A} \beta^{i-1}(G_e)$ for $i\geq 1$.
\end{itemize}
\end{Lem}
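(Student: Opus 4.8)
The plan is to treat all three parts uniformly: in each case I would identify a free $G$-CW-complex whose reduced $L^2$-cohomology is already understood, and then invoke two standard properties of the von Neumann dimension $\dim_G$ of reduced $L^2$-cohomology: its $G$-homotopy invariance, and its compatibility with induction, namely that for a subgroup $H\leq G$ and a free $H$-CW-complex $Z$ one has
$$\dim_G \overline{H}^i_{(2)}(G\times_H Z; G) = \dim_H \overline{H}^i_{(2)}(Z; H).$$
Since the action is co-compact, $X=G\backslash\mathcal{T}$ is a finite graph, so $V$ and $A$ are finite; this is what lets additivity of $\dim_G$ over finite direct sums be used freely in the sums below.

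For (i), I would observe that $Y\times EG$ is contractible (a product of contractible spaces) and carries a free $G$-action (free already on the $EG$-factor), hence is itself a model for $EG$. By $G$-homotopy invariance of reduced $L^2$-cohomology the projection $Y\times EG\to EG$ induces dimension-preserving isomorphisms, and the definition $\overline{H}^i_{(2)}(Y;G):=\overline{H}^i_{(2)}(Y\times EG;G)$ then yields $\dim_G\overline{H}^i_{(2)}(Y;G)=\beta^i(G)$. For (ii), recall that the vertex set of $\mathcal{T}$ is $Y'=\tilde V=\sqcup_{v\in V}G/G_v$ as a $G$-set, so $Y'\times EG=\sqcup_{v\in V}(G/G_v)\times EG$. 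The untwisting map $[g,x]\mapsto(gG_v,gx)$ gives a $G$-equivariant isomorphism $G\times_{G_v}EG\cong(G/G_v)\times EG$, and $EG$ restricted to $G_v$ is a model for $EG_v$. Applying the induction formula with $H=G_v$ and $Z=EG_v$ gives $\dim_G\overline{H}^i_{(2)}((G/G_v)\times EG;G)=\beta^i(G_v)$; summing over the finite set $V$ and using additivity of $\dim_G$ gives the claim.

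For (iii), the key observation is that the pair $(Y,Y')$ has relative cells only in dimension $1$, namely the edges $\tilde A=\sqcup_{e\in A}G/G_e$, since $Y'$ already contains every $0$-cell of $Y$ and there are no cells of dimension $\geq 2$. In particular the relative cochain complex vanishes in degree $0$, giving $\overline{H}^0_{(2)}(Y,Y';G)=0$ at once. For $i\geq 1$, the relative cells of $(Y\times EG,\,Y'\times EG)$ are exactly the products (a $1$-cell of $Y$)$\times$(an $(i-1)$-cell of $EG$); since the cellular boundary of an edge lands entirely in $Y'$, the relative differential is induced solely by the boundary on the $EG$-factor. Hence the relative complex is, up to a degree shift by one, the $L^2$-cochain complex of $(\sqcup_{e\in A}G/G_e)\times EG$, so that
$$\overline{H}^i_{(2)}(Y,Y';G)\cong \overline{H}^{i-1}_{(2)}\Big(\bigsqcup_{e\in A}(G/G_e)\times EG;\,G\Big).$$
Running the induction argument of part (ii) with $G_e$ in place of $G_v$ then gives $\dim_G\overline{H}^i_{(2)}(Y,Y';G)=\sum_{e\in A}\beta^{i-1}(G_e)$.

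The main obstacle I anticipate is the bookkeeping at the level of the relative cochain complex in (iii): one must verify carefully that the product/relative cell decomposition identifies the relative complex with the shifted $EG$-complex having coefficients in the $G$-module spanned by the edges, and that the differential genuinely reduces to the $EG$-boundary (signs being irrelevant for $\dim_G$). A secondary point worth stressing is that reduced $L^2$-cohomology does \emph{not} fit into a long exact sequence, so one cannot derive (iii) from (i) and (ii) by feeding the short exact sequence $0\to C^*_{(2)}(Y,Y';G)\to C^*_{(2)}(Y\times EG)\to C^*_{(2)}(Y'\times EG)\to 0$ into a connecting-homomorphism argument; it is precisely the direct cell-counting identification above that circumvents this difficulty. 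The induction/Shapiro formula for $\dim_G$ itself is standard, and I would simply cite it.
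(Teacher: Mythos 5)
Your proposal is correct and takes essentially the same route as the paper: part (i) via contractibility of $Y\times EG$ and uniqueness of $EG$, part (ii) via the orbit decomposition together with the induction (Shapiro-type) formula for $\dim_G$ (which the paper cites as Proposition 2.5 of Cheeger--Gromov), and part (iii) via the same cell count identifying $C^i_{(2)}(Y,Y';G)$ with $\ell^2(Y^{(1)}\times EG^{(i-1)})$ and the relative differential with $1\otimes d^{(i-1)}$, followed by the orbit argument with $G_e$ in place of $G_v$. The only cosmetic difference is that you spell out the untwisting isomorphism $G\times_{G_v}EG\cong (G/G_v)\times EG$ where the paper simply invokes the cited proposition.
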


\begin{proof}
\begin{itemize}
\item[i)] Since $Y$ is contractible, $Y\times EG$ is a contractible CW-complex on which $G$ acts properly freely. By uniqueness of $EG$, the space $Y\times EG$ is $G$-equivariantly homotopic to $EG$. So $$\dim_G \overline{H}^i_{(2)}(Y;G)=\dim_G\overline{H}^i_{(2)}(Y\times EG;G)=\beta^i(G).$$
\item[ii)] Choosing a vertex $\tilde{v}$ in each $G$-orbit of $Y'$, we get 
\begin{eqnarray*}
\dim_G \overline{H}^i_{(2)}(Y',G)&=&\sum_{v\in V} \dim_G \overline{H}^i_{(2)}(G\cdot\tilde{v};G)\\
&=&\sum_{v\in V}\dim_{G_{\tilde{v}}} \overline{H}^i_{(2)}(\tilde{v};G_{\tilde{v}})=\sum_{v\in V} \beta^i(G_v)
\end{eqnarray*}
 where the previous to last equality is \cite[Proposition 2.5]{Cheeger-Gromov}.
 \item[iii)] In degree 0, we have $C^0_{(2)}(Y,Y';G)=0$, as $Y\times EG$ and $Y'\times EG$ have the same vertices. In degree $i\geq 1$, denote by $Z^{(i)}$ the set of $i$-cells of the CW-complex $Z$. Observe that $$(Y\times EG)^{(i)}=\coprod_{k=0}^i (Y^{(k)}\times EG^{(i-k)})=(Y^{(0)}\times EG^{(i)})\amalg (Y^{(1)}\times EG^{(i-1)})$$
 as $Y$ is 1-dimensional. So
 $$C^i_{(2)}(Y,Y';G)=\ell^2(Y^{(1)}\times EG^{(i-1)}).$$ 
and the co-boundary operator $d^{(i)}: C^i_{(2)}(Y,Y';G)\rightarrow C^{i+1}_{(2)}(Y,Y';G)$ coincides with $1\otimes d^{(i-1)}$. Therefore $\dim_G \overline{H}^i(Y,Y';G)=\dim_G \ker(1\otimes d^{(i-1)})= \sum_{e\in A}\beta^{i-1}(G_e)$ by an argument similar to Part $(ii)$ above (by choosing one representative for each $G$-orbit in $Y^{(1)}$).
\end{itemize}
\end{proof}

The second part of Proposition \ref{L2Betti EQ} below, on amenable vertex-groups, was first obtained by J. Schafer (\cite{Sch}, Corollary 3.12, (ii)). 

\begin{Prop}\label{L2Betti EQ} Assume that, for every vertex $v$ of ${\cal T}$ the stabilizer $G_v$ satisfies $\beta^i(G_v)=0$ for $i\geq 1$. Then
$$\beta^1(G)=\sum_{e\in A}\frac{1}{|G_e|}-\sum_{v\in V}\frac{1}{|G_v|}$$ and $\beta^i(G)=\sum_{e\in A}\beta^{i-1}(G_e)$ for $i\geq 2$.
In particular, if $G_v$ is amenable for every $v\in V$, then $\beta^i(G)=0$ for $i\geq 2$.
\end{Prop}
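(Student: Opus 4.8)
The plan is to exploit the long exact sequence of the pair $(Y,Y')$ in reduced $L^2$-cohomology, feeding it the three computations of Lemma \ref{computL2}. Since $Y'$ is a subcomplex of $Y$, restriction of cochains is onto, so there is a short exact sequence of Hilbert $G$-cochain complexes
$$0\to C^*_{(2)}(Y,Y';G)\to C^*_{(2)}(Y;G)\to C^*_{(2)}(Y';G)\to 0,$$
which induces a long sequence in reduced cohomology
$$\cdots\to\overline{H}^i_{(2)}(Y,Y';G)\to\overline{H}^i_{(2)}(Y;G)\to\overline{H}^i_{(2)}(Y';G)\to\overline{H}^{i+1}_{(2)}(Y,Y';G)\to\cdots.$$
The essential point, and the step I expect to be the main obstacle, is that in reduced $L^2$-cohomology this sequence is only \emph{weakly} exact (at each spot the closure of the image equals the kernel), so the usual exact-sequence bookkeeping is not available verbatim. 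What rescues the argument is that von Neumann dimension is additive along weakly exact sequences of Hilbert $G$-modules and is insensitive to replacing an image by its closure; this is the content of L\"uck's dimension theory, and it is precisely what lets me treat the sequence above as if it were exact for the purpose of computing $\dim_G$. I would isolate and cite this additivity statement carefully, as everything else is bookkeeping.

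With that in hand I would abbreviate $B^i=\dim_G\overline{H}^i_{(2)}(Y;G)=\beta^i(G)$, $V^i=\dim_G\overline{H}^i_{(2)}(Y';G)=\sum_{v\in V}\beta^i(G_v)$ and $P^i=\dim_G\overline{H}^i_{(2)}(Y,Y';G)$, the values being read off from Lemma \ref{computL2}. The hypothesis $\beta^i(G_v)=0$ for $i\geq1$ forces $V^i=0$ for $i\geq1$, while $V^0=\sum_{v\in V}\beta^0(G_v)=\sum_{v\in V}\frac{1}{|G_v|}$ and, by Lemma \ref{computL2}(iii), $P^0=0$ and $P^i=\sum_{e\in A}\beta^{i-1}(G_e)$ for $i\geq1$; in particular $P^1=\sum_{e\in A}\beta^0(G_e)=\sum_{e\in A}\frac{1}{|G_e|}$. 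Here I use $\beta^0(H)=1/|H|$ for any group $H$, together with co-compactness to guarantee that $V$ and $A$ are finite, so all sums are finite.

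For $i\geq2$ the relevant stretch of the sequence reads $V^{i-1}\to P^i\to B^i\to V^i$ with $V^{i-1}=V^i=0$, so the middle map $P^i\to B^i$ is injective with dense image, hence a weak isomorphism; therefore $\beta^i(G)=B^i=P^i=\sum_{e\in A}\beta^{i-1}(G_e)$. For $i=1$ I would use the initial stretch $0\to B^0\to V^0\to P^1\to B^1\to V^1=0$. Since $G$ is infinite, $B^0=\beta^0(G)=0$, and additivity of $\dim_G$ along this (weakly exact, finite) sequence gives $B^0-V^0+P^1-B^1=0$, i.e. $\beta^1(G)=P^1-V^0=\sum_{e\in A}\frac{1}{|G_e|}-\sum_{v\in V}\frac{1}{|G_v|}$, as claimed.

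Finally, for the ``in particular'' statement I would note that each edge group $G_e$ embeds into a vertex group and is therefore amenable whenever the vertex groups are; by Cheeger--Gromov \cite{Cheeger-Gromov} an amenable group has $\beta^j=0$ for all $j\geq1$. Hence for $i\geq2$ every summand $\beta^{i-1}(G_e)$ vanishes (as $i-1\geq1$), giving $\beta^i(G)=\sum_{e\in A}\beta^{i-1}(G_e)=0$.
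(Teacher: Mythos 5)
Your proposal is correct and follows essentially the same route as the paper: the weakly exact long sequence of the pair $(Y,Y')$ in reduced $L^2$-cohomology (the paper cites \cite[Lemma 2.3]{Cheeger-Gromov} for this), additivity of von Neumann dimension along weakly exact sequences, and the dimension computations of Lemma \ref{computL2}, with the amenable case handled via Cheeger--Gromov vanishing for edge groups. The only cosmetic differences are that you derive the $i\geq 2$ case from a weak isomorphism $P^i\to B^i$ rather than the alternating-sum bookkeeping, and you make explicit some points the paper leaves implicit (e.g.\ $\beta^0(G)=0$ since $G$ is infinite, and finiteness of $V$ and $A$ from cocompactness).
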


\noindent
\begin{proof} According to \cite[Lemma 2.3]{Cheeger-Gromov}, the relative $L^2$-cohomology sequence:
$$0\rightarrow \overline{H}^0_{(2)}(Y,Y';G)\rightarrow \overline{H}^0_{(2)}(Y;G)\rightarrow\overline{H}^0_{(2)}(Y';G)$$
$$\rightarrow \overline{H}^1_{(2)}(Y,Y';G)\rightarrow \overline{H}^1_{(2)}(Y;G)\rightarrow\overline{H}^1_{(2)}(Y';G)\rightarrow...$$
is weakly exact. Then by the rank theorem for von Neumann $G$-dimension, whenever some space has $G$-dimension 0, the alternate sum of the $G$-dimensions of the previous terms vanishes. The first statement then follows immediately from Lemma \ref{computL2}. If all vertex-groups are amenable, then so are all edge-groups, hence $\beta^{i-1}(G_e)=0$ for $i\geq 2$ and $e\in A$.
\end{proof}

\begin{Ex}\begin{itemize}
\item[a)] The Baumslag-Solitar group $BS(1,2)$ is the solvable group with presentation
$$BS(1,2)=<a,b|aba^{-1}=b^2>.$$
Consider then the group $H$ with presentation 
$$H=<a_0,a_1,a_2|a_0a_1a_0^{-1}=a_1^2; \;a_1a_2a_1^{-1}=a_2^2>.$$
Clearly $H$ is the amalgamated product of two copies of $BS(1,2)$ over $\Z$:
$$H=BS(1,2)\ast_\Z BS(1,2),$$
so $\beta^i(H)=0$ for every $i\geq 0$ by Proposition \ref{L2Betti EQ}.
\item[b)] Consider the famous Higman group $H_4$ (see \cite{Hig}) with its presentation on 4 generators and 4 relations:
$$H_4=<a_0, a_1, a_2, a_3|a_ia_{i+1}a_i^{-1}=a_{i+1}^2, i\in\Z/4\Z>.$$
Then the subgroups $<a_0,a_1,a_2>$ and $<a_2,a_3,a_0>$ are both isomorphic to the group $H$ above, while $<a_0,a_2>$ is free of rank 2, and $H_4$ is an amalgamated product of two copies of $H$ over the free group $\mathbb{F}_2$:
$$H_4=H\ast_{\mathbb{F}_2}H.$$
By Proposition \ref{L2Betti EQ} we get $\beta^i(H_4)=\beta^{i-1}(\mathbb{F}_2)$ for $i\geq 2$, i.e.
$$\beta^i(H_4)=\left\{\begin{array}{ccc}0 & if & i\neq 2 \\1 & if & i=2\end{array}\right.$$
\end{itemize}
\end{Ex}

\subsection{Sufficient conditions for vanishing and non-vanishing}

The following result will be important for the treatment of reduced graphs of groups in the next section. Note that the assumption is satisfied if vertex groups have property (T).

\begin{Prop}\label{VanishH^1}
Let $G$ be a graph of groups with at least one edge, such that all vertex groups satisfy $H^1(G_v,\ell^2(G))=0$. The following are true:
\begin{enumerate}
 \item If for every edge $e$ we have $|G_e|=\8$, then $H^1(G, \ell^2(G)) = 0$ and $\beta^1(G)=0$.
 \item  If there is an edge $e\in A$ such that  $|G_e|<\8$ and $ e_+ =e_-$ then $H^1(G, \ell^2(G)) \neq 0$ If moreover $G$ is non-amenable, then $\beta^1(G)>0$. 
  \item If there is an edge $e\in A$ such that $|G_e|<\8$, $e_+ \neq e_-$ and both $[G_{e_+}:G_e]\geq 2 $ and $  [G_{e_-}:t_eG_et_e^{-1}]\geq 2$  then $H^1(G, \ell^2(G)) \neq 0$ If moreover $G$ is non-amenable, then $\beta^1(G)>0$. 
\end{enumerate}
\end{Prop}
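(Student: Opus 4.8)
\textbf{The plan.} The whole argument runs through the Mayer--Vietoris sequence (\ref{MayViet}) applied to the unitary $G$-module $M=\ell^2(G)$ (left regular representation). Since each vertex group satisfies $H^1(G_v,\ell^2(G))=0$, the map $\Delta\colon H^1(G,M)\to\prod_{v}H^1(G_v,M)$ lands in the zero group, so $\partial$ is onto and the sequence collapses to an isomorphism $H^1(G,\ell^2(G))\cong\operatorname{coker}(\iota)$, where $\iota\colon\prod_{v\in V}M^{G_v}\to\prod_{e\in A}M^{G_e}$ is $(\iota f)_e=f_{e_+}-t_e^{-1}f_{e_-}$. Throughout I will use the computation $\dim_G \ell^2(G)^{G_x}=1/|G_x|$ (the projection onto $\ell^2(G)^{G_x}$ is $e_{G_x}=\tfrac{1}{|G_x|}\sum_{h\in G_x}\lambda(h)$, of trace $1/|G_x|$; in particular $\ell^2(G)^{G_x}=0$ when $|G_x|=\infty$), together with the inclusions $\ell^2(G)^{G_{e_+}}\subseteq \ell^2(G)^{G_e}$ and $t_e^{-1}\ell^2(G)^{G_{e_-}}\subseteq \ell^2(G)^{G_e}$ that make the coordinatewise description of $\operatorname{im}(\iota)$ meaningful.

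For statement (1), if every edge group is infinite then $M^{G_e}=0$ for all $e$, so $\prod_{e\in A}M^{G_e}=0$ and hence $H^1(G,\ell^2(G))=0$. Since $\overline H^1(G,\ell^2(G))$ is a quotient of $H^1(G,\ell^2(G))$, it also vanishes, and therefore $\beta^1(G)=\dim_G\overline H^1(G,\ell^2(G))=0$.

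For the non-vanishing in (2) and (3) I first prove $H^1(G,\ell^2(G))\neq 0$ by showing that $\iota$ is not onto. It suffices to find $\omega$ whose $e$-coordinate lies outside the set of $e$-coordinates of elements of $\operatorname{im}(\iota)$; as the coordinates $f_{e_+},f_{e_-}$ vary freely, that set is $(1-t_e^{-1})M^{G_v}$ in the loop case $e_+=e_-=v$, and $M^{G_{e_+}}+t_e^{-1}M^{G_{e_-}}$ when $e_+\neq e_-$. So the task reduces to showing that these (algebraic) subspaces are proper in $M^{G_e}$. I would split into two regimes. When the von Neumann dimension of the \emph{closure} is strictly smaller than $\dim_G M^{G_e}=1/|G_e|$, the closure, hence the algebraic subspace, is proper; using $\dim_G\overline{P+Q}=\dim_G P+\dim_G Q-\dim_G(P\cap Q)$ and $\dim_G\overline{(1-t_e^{-1})M^{G_v}}=\dim_G M^{G_v}$ (the operator $1-t_e^{-1}$ is injective on $M^{G_v}$ because its kernel is $M^G=0$), this covers every case except two boundary situations: a loop with $G_e=G_v$, and a non-loop edge with both indices equal to $2$. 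In these cases the two relevant closed subspaces have trivial intersection (equal to $\ell^2(G)^{H}=0$ for the infinite subgroup $H$ they generate) and their dimensions add up to exactly $1/|G_e|$, so I instead show their algebraic sum is \emph{not closed}: the subgroup $H$ generated by the two local groups is amenable (it is $\mathbb{Z}$, respectively the infinite dihedral group $\mathbb{Z}/2\ast\mathbb{Z}/2$), so $\ell^2(G)$ carries $H$-almost-invariant unit vectors $\xi_n$; averaging over the two finite local groups produces unit vectors $p_n\in P$, $q_n\in Q$ with $\|p_n-q_n\|\to 0$, forcing the angle between $P$ and $Q$ to be zero and the sum $P+Q$ to be non-closed, hence proper. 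For the loop with $G_e=G_v$ one may alternatively invoke Corollary~\ref{semidir}: $1$ is a spectral value of $t_e$ acting on $M^{G_v}\cong\ell^2(\mathbb{Z})$ as the bilateral shift.

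Finally, for the statement $\beta^1(G)>0$ when $G$ is non-amenable, the point is to pass from unreduced to reduced cohomology. Because $G$ is non-amenable, $\ell^2(G)$ has no $G$-almost-invariant vectors, so by Guichardet's theorem the space of coboundaries $B^1(G,\ell^2(G))$ is closed in $Z^1(G,\ell^2(G))$; hence $H^1(G,\ell^2(G))=\overline H^1(G,\ell^2(G))$. Combined with the previous paragraph this gives $\overline H^1(G,\ell^2(G))\neq 0$, and since $\dim_G$ is faithful, $\beta^1(G)=\dim_G\overline H^1(G,\ell^2(G))>0$. The main obstacle is precisely the boundary dimension cases above, where the von Neumann dimension count is inconclusive and one must argue non-closedness of a sum of two closed subspaces via amenability of the generated subgroup; a secondary point requiring care is the legitimacy of identifying $H^1(G,\ell^2(G))$ with $\operatorname{coker}(\iota)$ and $\beta^1(G)$ with $\dim_G\overline H^1(G,\ell^2(G))$ for a possibly non-cocompact graph of groups.
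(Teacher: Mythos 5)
Your proof is correct, but it takes a genuinely different route from the paper's in the two non-vanishing cases. Both arguments begin identically: since $H^1(G_v,\ell^2(G))=0$ for every vertex, Chiswell's sequence (\ref{MayViet}) reduces everything to non-surjectivity of $\iota$, and the passage from $H^1\neq 0$ to $\beta^1>0$ for non-amenable $G$ is the same appeal to Guichardet and to $\beta^1(G)=\dim_G\overline{H}^1(G,\ell^2(G))$ that the paper makes. Where you diverge is in proving that $\iota$ misses something. The paper argues bare-handed: assuming a preimage $f$ of the target coordinate $\chi_{G_e}$ exists, the invariance relations force $f_v$ (resp.\ $f_v$ and $f_u$) to take the constant value $f_v(1)$ on the infinite set $\{t_e^{-n}\}_n$ (resp.\ $\{(g_vg_u)^{-n}\}_n$ with $g_v\in G_{e_+}\setminus G_e$, $g_u\in G_{e_-}\setminus G_e$), so $f_v(1)=0$ by square-summability, contradicting $f_v(1)-f_v(t_e)=1$ (resp.\ $f_v(1)-f_u(1)=1$). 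This is elementary, needs no von Neumann dimension theory, and treats all index configurations uniformly. You instead identify the achievable $e$-coordinates with $(1-t_e^{-1})M^{G_v}$, resp.\ $M^{G_{e_+}}+t_e^{-1}M^{G_{e_-}}$, and prove properness by a trace count (Kaplansky's parallelogram formula plus rank--nullity for module maps), falling back in the two boundary cases (a loop with $G_e=G_v$; a segment with both indices $2$) on genuine functional analysis: the bilateral-shift spectral argument, resp.\ non-closedness of $P+Q$ via zero angle, witnessed by almost-invariant vectors for the amenable group $H=\langle G_{e_+},\,t_e^{-1}G_{e_-}t_e\rangle$. What your route buys is conceptual insight: it isolates exactly the critical cases --- precisely the loop $\mathbb{Z}\ltimes G_v$ and the $(2,2)$-amalgam appearing in Theorem \ref{L2 Betti} --- and explains their delicacy: there $\mathrm{coker}(\iota)$ is nonzero but has von Neumann dimension zero, so no dimension count can detect it. What the paper's route buys is brevity, no machinery, and no case split.

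Three points to tighten, none fatal. First, the kernel of $(1-t_e^{-1})|_{M^{G_v}}$ is $\ell^2(G)^{\langle G_v,t_e\rangle}$, not $M^G$; it is still zero because $t_e$ has infinite order, but $\langle G_v,t_e\rangle$ need not be all of $G$ when the graph has more than one edge. Second, in the boundary case of (3) the group $H$ is $G_{e_+}*_{G_e}(t_e^{-1}G_{e_-}t_e)$, which is finite-by-infinite-dihedral rather than literally $\mathbb{Z}/2*\mathbb{Z}/2$; infiniteness and amenability, which is all your argument uses, still hold. Third, the ``sum of two subspaces not closed'' mechanism does not apply to the loop boundary case, since there the achievable set is the image of a single operator rather than a sum $P+Q$; the bilateral-shift argument you offer as an alternative is in fact the required one, and it needs the (easy) observation that $t_e$ normalizes $G_v$ when $\sigma_e$ is onto and $G_e$ is finite, so that $t_e$ really does act on $M^{G_v}\cong\ell^2(G_v\backslash G)$ by a direct sum of bilateral shifts.
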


\vskip.15in
\noindent
\begin{proof}(1) By Chiswell's sequence (\ref{MayViet}), we have that $H^1(G,\ell^2(G))=0$ if and only if the map $\iota : \prod_{v\in V}\ell^2(G)^{G_v} \to \prod_{e\in A}\ell^2(G)^{G_e}$ is onto. 

If for every $e\in A$  we have that $|G_e|=\8$   then $\ell^2(G)^{G_e} = \{0\}$ and $\iota$ is onto. 

(2) Assume that for every $e\in A$ we have $|G_e|<\8$ and $e_+=e_-$. By contradiction assume that $H^1(G,\ell^2(G))= \{0\}$ then  $\iota$ is onto by Chiswell's sequence (\ref{MayViet}), and in particular, there is an $f$ such that $\iota(f)_e=\chi_{G_e}$. Let $v:=e_+=e_-$. Then, we have that $ f_v(x)-f_v(t_e x) = \chi_{G_e}(x)$, in particular if $x\notin G_e$ then $f_v(x) - f_v(t_e x)= 0$ i.e. $f_v(x) = f_v(t_e x)$. Taking $x= t_e^{\pm n}$ for $n\in \mathbb{N}$, a straightforward induction shows that if $n\geq 1$ then $f_v(t_e^n) = f_v(t_e)$ and $f_v(t_e^{-n}) =f_v(1)$.  Since $f_v\in \ell^2(G)$ and $\<t_e\>$ is an infinite subgroup of $G$, we conclude that $f_v(1)= f_v(t_e)=0$.

On the other hand, $f_v(1)-f_v(t_e) = \chi_{G_e}(1) =1$ which means that either $f_v(1)\neq0$ or $f_v(t_e)\neq0$, a contradiction.

\medskip
\noindent
(3) Assume that there is  $e\in A$ such that $|G_e|<\8$, $e_+\neq e_-$ and 
 $[G_{e_+}:G_e]\geq 2$ and $[G_{e_-}:t_eG_et_e^{-1}]\geq 2$. 
\vskip.15in
\noindent
Fix such an $e$ and set $v:=e_+$ and $u=e_-$. We may then take $e$ to be in the maximal spanning tree of the quotient graph so that $G_e \leq G_v \cap G_u$. 

 Observe that $\iota^{-1}(\ell^2(G)^{G_e}) = \ell^2(G)^{G_v}\oplus \,\ell^2(G)^{G_u}$. By contradiction, assume that $\iota (f_v,f_u) = f_v-f_u = \chi_{G_e} \text{ for some }(f_v,f_u) \in \ell^2(G)^{G_v}\oplus \ell^2(G)^{G_u}.$
This means that  $f_v(x) = f_u(x)$ for every $x\notin G_e$. 

By assumption, there is a $g_v\in G_v\setminus G_e$ and a $g_u\in G_u\setminus G_e$. Then, $g_vg_u$ is a hyperbolic isometry of the tree (from which the graph of groups decomposition comes). This means that for each $n\in \mathbb N$ the element  $(g_vg_u)^n \in G$ is distinct and not in $ G_e$. We claim that $f_v((g_vg_u)^{-n}) = f_v(1)$ for every $n\in \mathbb N$. Assume $n=1$. Then, since $(g_vg_u)^n \notin G_e$, and $f_u$ and $f_v$ are $G_u$ and $G_v$-invariant respectively, we have that

\begin{eqnarray*}
f_v(g_u^{-1}g_v^{-1}) &=&  f_u(g_u^{-1}g_v^{-1})\\
&=&  f_u(g_v^{-1})\\
&=&  f_v(g_v^{-1})\\
&=&  f_v(1)\\
\end{eqnarray*}

\noindent
Assume that $f_v((g_vg_u)^{-n}) = f_v(1)$. Then, again, we have that $(g_vg_u)^{n+1} \notin G_e$ and so

\begin{eqnarray*}
f_v(g_u^{-1}g_v^{-1}(g_vg_u)^{-n}) &=&  f_u(g_u^{-1}g_v^{-1}(g_vg_u)^{-n})\\
&=&  f_u(g_v^{-1}(g_vg_u)^{-n})\\
&=&  f_v(g_v^{-1}(g_vg_u)^{-n})\\
&=&  f_v((g_vg_u)^{-n})\\
&=&  f_v(1)\\
\end{eqnarray*}

Therefore, the set $\{g\in G: f_v(g) = f_v(1)\}$ is infinite. This means that $f_v(1)=0$. A similar argument shows that $f_u(1)=0$. But this is impossible as $f_v(1)-f_u(1)=\chi_{G_e}(1) = \chi_{G_e}(1) =1$, a contradiction. Therefore, $\iota$ is not onto. 

\medskip
The statements regarding $\beta^1(G)$ follow from one of the possible definitions for $\beta^1(G)$, namely the von Neumann dimension of $\overline{H}^1(G,\ell^2(G))$ (see Definition 1.30 in \cite{Leuck}), together with Guichardet's classical result that  $\overline{H}^1(G,\ell^2(G))=H^1(G,\ell^2(G))$ when $G$ is non-amenable.



\end{proof}

When vertex stabilizers $G_v$ are non-amenable with $\beta^1(G_v)=0$, part (1) of Proposition \ref{VanishH^1} appears as Theorem 4.1 in \cite{MV07}.

\subsection{Reduced graphs of groups}\label{redgraphs}

\begin{Def}[\cite{deCornulier}] \label{reduced} A graph of groups is said to be \emph{reduced} if  whenever $e\in E$ such that $e_+ \neq e_-$ we have that $[G_{e_+}: G_e]\geq 2$ and $[G_{e_-}:t_eG_et_e^{-1}]\geq 2.$ Otherwise, it is said to be 
\emph{unreduced}.
 \end{Def}

As is pointed out in \cite{deCornulier}, one may pass from an unreduced graph of groups to a  reduced one simply by retracting edges $e\in E$ such that $e_+\neq e_-$ and $G_e =  G_{e_+}$ without affecting the isomorphism type of the group.

We make the important observation that the  cases of Proposition \ref{VanishH^1} account for all possibilities whenever the graph of groups is reduced. 
Indeed either $|G_e| = \8$ for every $e\in A$ (which is case (1)), or there is an edge $e\in A$ such that $|G_e|<\8$. Then either $e_+=e_-$ and we are in case (2) or for every $e\in A$ such that $|G_e|<\8$ we must have that $e_+\neq e_-$. For such edge $e$ we must have $[G_{e_+}:G_e]\geq 2 $ and $[G_{e_-}:t_eG_et_e^{-1}]\geq 2 $ (which is case (3)) because the graph of groups is reduced. 

We now turn to the proof of Theorem \ref{L2 Betti}. We recall the theorem (and slightly rephrase one of the items):

\begin{Thm}
  Let $X$ be a graph, $T$ a maximal tree in $X$, $({\cal G}, X)$ a reduced graph of groups, and $G = \pi_1({\cal G}, X, T)$. Assume that $\beta^1(G_v)=0$ for every $v\in V$ and $\sum\frac{1}{|G_e|}<\8$. Then $\beta^1(G)=0 $ if and only if $G$ belongs to one of the following cases:
  
\begin{enumerate}
 \item The graph $X$ is a single vertex. Then $G= G_v$.
 \item The graph $X$ is a single loop and $G=\Z\ltimes G_v$.  
 \item The graph $X$ is a single edge with $|G_e|<\8$, $e_+\neq e_-$ and $[G_{e_\pm}:G_e]=2$.
  \item Every edge group is infinite.
\end{enumerate}

\end{Thm}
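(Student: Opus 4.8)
The plan is to reduce everything to the $\ell^2$-Betti number formula of Proposition \ref{L2Betti EQ} and then run an elementary discharging argument on the graph $X$. First I would dispose of the degenerate case: if $X$ has no edges then $G=G_v$ and $\beta^1(G)=\beta^1(G_v)=0$ by hypothesis, which is case (1). So assume $X$ has at least one edge. Since $\beta^1(G_v)=0$ for every $v$ is all that the $\beta^1$-part of Proposition \ref{L2Betti EQ} uses (the weak-exactness/rank argument only needs the term $\sum_v\beta^1(G_v)$ to vanish), and since $\sum_e 1/|G_e|<\infty$ guarantees convergence, I would record the identity $\beta^1(G)=\sum_{e\in A}\frac{1}{|G_e|}-\sum_{v\in V}\frac{1}{|G_v|}$. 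Everything then becomes the combinatorial problem of deciding when this alternating sum vanishes, under the reducedness hypothesis.

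The engine is a discharging argument. I would distribute the charge $1/|G_e|$ of each edge $e\in A$ to its endpoints: a half $\frac{1}{2|G_e|}$ to each end of a proper edge ($e_+\neq e_-$), and the whole of $1/|G_e|$ to the single vertex of a loop, so that $\beta^1(G)=\sum_v \mathrm{net}(v)$ with $\mathrm{net}(v)=-\frac{1}{|G_v|}+(\text{charge }v\text{ receives})$. The point is that $\mathrm{net}(v)\ge 0$ for every $v$: if $G_v$ is infinite this is clear since $-1/|G_v|=0$; if $G_v$ is finite, reducedness gives $\frac{1}{2|G_e|}=\frac{[G_{e_+}:G_e]}{2|G_v|}\ge\frac{1}{|G_v|}$ at each incident $e_+$-end, $\frac{1}{2|G_e|}=\frac{[G_{e_-}:t_eG_et_e^{-1}]}{2|G_v|}\ge\frac{1}{|G_v|}$ at each $e_-$-end, and $\frac{1}{|G_e|}=\frac{[G_v:G_e]}{|G_v|}\ge\frac{1}{|G_v|}$ for a loop; since connectedness forces every vertex to be incident to at least one edge-end, $v$ receives at least $1/|G_v|$. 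Hence $\beta^1(G)\ge 0$, with equality if and only if $\mathrm{net}(v)=0$ for every $v$.

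The rest is reading off what $\mathrm{net}(v)=0$ means. An \emph{infinite} vertex has $\mathrm{net}(v)=0$ iff every incident edge group is infinite; a \emph{finite} vertex has $\mathrm{net}(v)=0$ iff it is incident to exactly one edge-end that saturates the bound above, i.e. $[G_{e_\pm}:G_e]=2$ for a proper edge or $G_e=G_v$ for a loop. I would first observe that a finite and an infinite vertex can never be adjacent (the connecting edge group would be finite, violating the infinite-vertex condition), so by connectedness either all vertex groups are infinite or all are finite. In the first case all edge groups are infinite as well, giving case (4). In the second case every vertex receives exactly one contribution; counting these (a loop contributing one, each proper edge contributing one to each endpoint) gives $|V|=2P+L$ with $P$ proper edges and $L$ loops, while connectedness gives $P\ge |V|-1$, forcing $|V|\le 2$. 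For $|V|=1$ this leaves a single loop with $G_e=G_v$, so $G=\mathbb{Z}\ltimes G_v$ (case (2)); for $|V|=2$ it leaves a single proper edge with $[G_{e_\pm}:G_e]=2$ (case (3)). The converse directions are immediate substitutions into the formula: $\frac{1}{|G_v|}-\frac{1}{|G_v|}=0$ for a loop with $G_e=G_v$, $\frac{1}{|G_e|}-\frac{2}{2|G_e|}=0$ for an index-$2$ single edge, and $0-\sum_v\frac{1}{|G_v|}=0$ when all groups are infinite.

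The step I expect to require the most care is the honest justification that the Betti-number formula of Proposition \ref{L2Betti EQ} remains valid in this non-cocompact setting, where the summability hypothesis $\sum 1/|G_e|<\infty$ plays the role that cocompactness played in Section \ref{SectL2Betti}; together with the bookkeeping that separates loops from proper edges and keeps the reducedness inequalities sharp in the equality analysis (in particular noticing that a ``single loop'' yields $\beta^1(G)=0$ only when the edge group is the whole vertex group, not merely of finite index in it).
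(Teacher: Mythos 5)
Your proof is correct, and its ``only if'' direction takes a genuinely different route from the paper's. The paper proves the converse via the amenable/non-amenable dichotomy: for non-amenable $G$ it shows $H^1(G,\ell^2(G))\neq 0$ whenever some edge group is finite (Proposition \ref{VanishH^1}, i.e.\ non-surjectivity of $\iota$ in Chiswell's sequence, combined with Guichardet's equality $\overline{H}^1=H^1$), forcing case (4); for amenable $G$ it invokes elementarity of the action on the Bass--Serre tree \cite{PaysValette} and then de Cornulier's structure lemmas for actions fixing one end or a pair of ends \cite{deCornulier}, which deliver cases (2), (3) and (4). You replace all of that group-theoretic input by a discharging argument on the identity $\beta^1(G)=\sum_{e\in A}1/|G_e|-\sum_{v\in V}1/|G_v|$; this is more elementary and uniform, and it proves slightly more, namely $\beta^1(G)=\sum_v \mathrm{net}(v)\geq 0$ with the vanishing analyzed locally at each vertex. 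The trade-off is where the analytic weight sits: the paper's converse never uses Proposition \ref{L2Betti EQ} at all (Chiswell's sequence needs neither cocompactness nor summability), whereas your argument leans on that formula in both directions, including for infinite quotient graphs --- exactly the step you flag at the end. Your two reductions there are right: only $\sum_v\beta^1(G_v)=0$ enters the degree-$\leq 1$ portion of the weakly exact sequence, so $\beta^1(G_v)=0$ suffices without higher $\beta^i$; and all terms in the alternating sum are finite, since $\sum_v 1/|G_v|\leq 2\sum_{e}1/|G_e|<\infty$ (every vertex meets an edge, every edge has at most two endpoints). The remaining extension of Lemma \ref{computL2} and the Cheeger--Gromov sequence to non-cocompact actions is genuinely needed, but the paper itself uses it tacitly when applying Proposition \ref{L2Betti EQ} to case (4), so this reliance is shared rather than a defect of your approach. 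Two details to spell out in a final write-up: $G$ is infinite as soon as the reduced graph has an edge (so $\beta^0(G)=0$, which the formula silently uses), and in the single-loop equality case it is the finiteness of $G_v$ that upgrades $|G_e|=|G_v|$ to both edge monomorphisms being onto, whence $G\cong\Z\ltimes G_v$ rather than merely an ascending HNN extension.
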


\noindent
\begin{proof}
If $X=\{v\}$ is a single vertex then $G= G_v$ so $\beta^1(G) = 0$.

Assume $X$ is a single loop with $G = \Z\ltimes G_v$. Then, $G_e \cong G_v$ and we have that $\beta^1(G) = 0$ by Proposition \ref{L2Betti EQ}.

If $X$ is a single edge then  $G = G_{e_+}*_{G_e} G_{e_-}$  is an amalgamated product. Assuming that  $|G_e|<\8$ and $[G_{e_\pm}:G_e]=2$. Then we may again apply Proposition \ref{L2Betti EQ} to deduce that 

$$\beta^1(G) = \frac{1}{|G_e|} -  \frac{1}{|G_{e_+}|}-  \frac{1}{|G_{e_+}|}=0.$$

Finally, suppose that every edge group is infinite. It then follows that all vertex groups are infinite as well, and hence by Proposition \ref{L2Betti EQ}, we conclude that $\beta^1(G) = 0$. 

Conversely, suppose that $\beta^1(G)=0$. 






We may assume that we are not in case (1), i.e. $X$ has at least one edge. If $G$ is non-amenable, then by Proposition \ref{VanishH^1} all edge groups are infinite, i.e. we are in case (4). So assume $G$ is amenable, and let $G$ act without inversion on the universal cover $\cal{T}$ of the graph of groups $({\cal G},X)$. By the main result of \cite{PaysValette}, the action of $G$ on ${\cal T}$ is elementary. If $G$ fixes a vertex $v$, then $X=\{v\}$ as $X$ is reduced, and we excluded this. If $G$ fixes two boundary points of ${\cal T}$, then by lemma 18 of \cite{deCornulier}, either $X$ is a loop and $G$ is a semi-direct product $G=\Z\ltimes G_v$ (and we are in case (2)), or $X$ is a segment and $G$ is an amalgamated product with both indices $[G_{e_\pm}:G_e]$ being equal to 2 (and we are in case (3)). If $G$ fixes exactly one boundary point of ${\cal T}$, then by lemma 17 of \cite{deCornulier}, $X$ is a loop and $G$ is an ascending HNN-extension $G=HNN(G_v,\theta)$, where $\theta:G_v\rightarrow G_v$ is injective but not surjective. This of course implies that $G_v$ is infinite. Since $G_e\simeq G_v$, we are in case (4).

\end{proof}

\section{Large groups of automorphisms of $\mathcal{T}$}\label{SectLargeGroups}

In this section we are concerned with closed subgroups $G$ of the automorphism group of a locally finite tree $\mathcal{T}$, acting transitively on the boundary $\partial\mathcal{T}$. It is  known (see Proposition I.10.2 in \cite{FTN}) that $G$ has one or two orbits on the set $V$ of vertices of $\mathcal{T}$, so that $\mathcal{T}$ is either regular or bi-regular. We denote by $\hat{G}$ the dual of $G$, i.e. the set of irreducible unitary  representations of $G$, up to unitary equivalence. 

Pointwise stabilizers in $G$ of finite subtrees of $\mathcal{T}$, form a basis of compact open neighborhoods of the identity in $G$; for $J$ a finite subtree, let $G_J$ be its pointwise stabilizer in $G$. For $\pi\in\hat{G}$, let $P_{\pi,J}$ be the orthogonal projection from the Hilbert space of $\pi$, onto the subspace of $\pi(G_J)$-fixed vectors. We denote by $\ell_\pi$ the minimum cardinality of (the vertex set of) a finite subtree $J$ such that $P_{\pi,J}\neq 0$. Following \cite{FTN}, we say that:
\begin{itemize}
\item $\pi$ is {\it spherical} if $\ell_\pi=1$;
\item $\pi$ is {\it special} if $\ell_\pi=2$;
\item $\pi$ is {\it cuspidal} if $\ell_\pi >2$.
\end{itemize}
Note that $\pi$ is spherical if and only if $\pi$ is a spherical representation with respect to the Gelfand pair $(G,G_a)$, where $G_a$ is the stabilizer of an arbitrary vertex $a\in V$.

Our aim in this section is to give a new proof of a result of Nebbia \cite{Neb} describing $H^1(G,\pi)$, for $\pi\in\hat{G}$; a feature of our proof is that Nebbia appeals to Delorme's theorem \cite{Del} for the vanishing of the first cohomology of a non-trivial spherical representation associated with an arbitrary Gelfand pair. In our  situation, we bypass the use of Delorme's result thanks to the concrete description of spherical representations from \cite{FTN}.

\subsection{The case of two orbits on $V$}

If $G$ has two orbits on $V$, then $G$ acts without inversion on $\mathcal{T}$, with fundamental domain an edge $e=[a,b]$, so $G$ appears as an amalgamated product $G=G_a*_{G_e}G_b$. (Examples are provided by $G=PSL_2(F)$, where $F$ is a non-archimedean local field; or by $G=Aut^+(\mathcal{T})$, the subgroup generated by elliptic automorphisms.) In this case $G$ has a unique special representation $\sigma$ (see Theorem III.2.6 in \cite{FTN}, and the comments following the proof).

We now turn to the proof of Theorem \ref{twoorbits}:

\begin{proof} Let $M_\pi$ be the Hilbert space of $\pi$. If $\ell_\pi>2$, then $M_\pi^{G_e}=\{0\}$, so the result follows from Theorem \ref{main}.

Assume $\ell_\pi=1$; if $\pi$ is the trivial representation, then $H^1(G,\pi)=0$, as $G$ is generated by the union of two compact subgroups (so every homomorphism $G\rightarrow\mathbb{C}$ is trivial). So we may assume that $\pi$ is non-trivial, and appeal to the realization of $\pi$ as a boundary representation, as in Chapter II of \cite{FTN}: the space $M_\pi$ is then a suitable completion of the space of locally constant functions on $\partial\mathcal{T}$, and there exists $s\in ]0,1[\cup(\frac{1}{2}+i\mathbb{R})$ such that the $G$-action is given by
$$\pi(g)\xi(\omega)=P(g,\omega)^s\xi(g^{-1}\omega)$$
where $P(g,\omega)$ is the Radon-Nikodym derivative $\frac{d\nu_{ga}}{d\nu_a}(\omega)$, where $\nu_x$, for $x\in V$, is the unique $G_x$-invariant probability measure on $\partial\mathcal{T}$.

Let $\partial\mathcal{T}=\partial\mathcal{T}_a\cup\partial\mathcal{T}_b$ be the partition of $\partial\mathcal{T}$ induced by the edge $e$: so $\partial\mathcal{T}_a$ is the set of ends $\omega$ such that the ray $[a,\omega[$ does not contain $b$, and vice-versa. Then $M_\pi^{G_a}$ is 1-dimensional (it consists of constant functions on $\partial\mathcal{T}$), $M_\pi^{G_e}$ is 2-dimensional (it consists of functions constant on $\partial\mathcal{T}_a$ and $\partial\mathcal{T}_b$), and $M_\pi^{G_b}$ is 1-dimensional: the latter consists of functions $\xi$ constant on $\partial\mathcal{T}_a$ and $\partial\mathcal{T}_b$, which moreover satisfy:
$$\xi|_{\partial\mathcal{T}_b}=q^{2s}\xi|_{\partial\mathcal{T}_a}$$
where $q+1$ is the degree of the vertex $a$; this follows from the computation of $\frac{d\nu_y}{d\nu_x}$ in Section II.1 of \cite{FTN}. It is then clear that our map $\iota:M_\pi^{G_a}\oplus M_\pi^{G_b}\rightarrow M_\pi^{G_e}$ is onto, so by Theorem \ref{main} we have $H^1(G,\pi)=0$.

Finally we deal with the special representation $\sigma$. Then $M_\sigma^{G_a}=M_\sigma^{G_b}=\{0\}$ (since $\ell_\sigma=2$), so by Theorem \ref{main} we have: $H^1(G,\sigma)\simeq M_\sigma^{G_e}$. By Proposition III.2.3 of \cite{FTN}\footnote{Strictly speaking, this deals with groups having one orbit on vertices, but the comments following Theorem III.2.6 in \cite{FTN} show how to modify it for two orbits.} we have $\dim M_\sigma^{G_e}=1$, completing the proof.
\end{proof}

\subsection{The case of one orbit on $V$}

In this case $\mathcal{T}$ is a $(q+1)$-regular tree, on which $G$ acts with inversions and transitively on the vertex set of $\mathcal{T}$. Examples of this situation are provided by $G=Aut(\mathcal{T})$, or $G=PGL_2(F)$, with $F$ a non-archimedean local field; less classical examples appear in \cite{AmPhD}.

Since $G$ acts with inversions on $\mathcal{T}$, Theorem \ref{main} does not apply immediately. To remedy this, we pass to the first barycentric subdivision $\mathcal{T}_1$ of $\mathcal{T}$, where the assumptions of Theorem \ref{main} hold. 

The new action of $G$ on $\mathcal{T}_1$ has a single edge as a quotient with vertex set $\{a,b\}$ and edge set $\{e\}$. Say that $a$ corresponds to some vertex $\tilde{a}$ of $\mathcal{T}$, and $b$ corresponds to some edge $\tilde{e}$ of $\mathcal{T}$, with $\tilde{a}\in\tilde{e}$. 

With this notation, we have that $G_a = \mathrm{stab}_G(\tilde a)$ and $G_b = \mathrm{stab}_G(\tilde e)$ and $G_e = G_a\cap G_b$. (Here, $\mathrm{stab}_G(\tilde e)$ denotes those elements of $G$ which preserve $\tilde e$ as a set whereas $G_e$ corresponds to the point-wise stabilizer of $\tilde e$ in $G$, so $[\mathrm{stab}_G(\tilde e) :G_e] \leq 2$.) And so
$$G=G_a*_{G_e} G_b.$$

In this case, the pair $(G,G_b)$ is a Gelfand pair (see Lemma II.4.1 in \cite{FTN}). Moreover, up to unitary eqivalence, $G$ has two special representations $\sigma^+,\,\sigma^-$, distinguished by the fact that $\sigma^+$ is a spherical representation for the Gelfand pair $(G,G_b)$, while $\sigma^-$ is not (see Theorem III.2.6 in \cite{FTN}).

The following result has been obtained by Amann \cite{AmMaster} for $G=Aut(\mathcal{T})$, and by Nebbia \cite{Neb} in the general case.

\begin{Thm} Let $G$ be a closed subgroup of $Aut(\mathcal{T})$, acting transitively on $\partial\mathcal{T}$ and $V$. If $\pi\in\hat{G}\backslash\{\sigma^-\}$, then $H^1(G,\pi)=0$; on the other hand $H^1(G,\sigma^-)$ is 1-dimensional.
\end{Thm}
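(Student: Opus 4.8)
The plan is to run the very same strategy as in the proof of Theorem \ref{twoorbits}, now applied to the amalgam $G=G_a*_{G_e}G_b$ arising from the action on the barycentric subdivision $\mathcal{T}_1$. Since $G_a$ and $G_b$ are compact open subgroups, they have Property (T), so Theorem \ref{main} applies: for $\pi\in\hat{G}$ on the Hilbert space $M_\pi$ we have $H^1(G,\pi)=0$ iff $\iota:M_\pi^{G_a}\oplus M_\pi^{G_b}\to M_\pi^{G_e}$ is onto, and (as $H^1(G_a,\pi)=H^1(G_b,\pi)=0$ by compactness) the sequence (\ref{MayViet}) identifies $H^1(G,\pi)$ with the cokernel of $\iota$. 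Recall that $G_a$ is a vertex stabilizer while $G_e$ is the pointwise stabilizer of the two-vertex subtree $\tilde e$, so $M_\pi^{G_a}=P_{\pi,\{\tilde a\}}M_\pi$ is nonzero exactly when $\pi$ is spherical, and $M_\pi^{G_e}=P_{\pi,\tilde e}M_\pi$ is nonzero exactly when $\ell_\pi\leq 2$. Under the inclusions $G_e\subset G_a$, $G_e\subset G_b$, the image of $\iota$ is the span of $M_\pi^{G_a}$ and $M_\pi^{G_b}$ inside $M_\pi^{G_e}$ (the quotient graph being a single tree-edge, so $t_e=1$). I would organize the argument by the value of $\ell_\pi$.

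The cuspidal case $\ell_\pi>2$ is immediate: then $M_\pi^{G_e}=0$, so $\iota$ is vacuously onto and $H^1(G,\pi)=0$. For the spherical case $\ell_\pi=1$, the trivial representation is handled as before, since $G$ is generated by the two compact subgroups $G_a,G_b$ and hence admits no nonzero homomorphism to $\mathbb{C}$. For $\pi$ nontrivial spherical I would invoke the boundary realization of Chapter II of \cite{FTN}, with parameter $s\in\,]0,1[\,\cup(\tfrac12+i\mathbb{R})$, $s\neq 0$. Writing $\partial\mathcal{T}=\partial\mathcal{T}_{\tilde a}\sqcup\partial\mathcal{T}_{\tilde a'}$ for the partition induced by $\tilde e=[\tilde a,\tilde a']$, one checks that $M_\pi^{G_a}$ is the line of constants, that $M_\pi^{G_e}$ is two-dimensional, spanned by the indicators of the two half-boundaries, and that $M_\pi^{G_b}$ is again one-dimensional: since $G_b$ contains an involution $w$ exchanging $\tilde a$ and $\tilde a'$, a short Radon--Nikodym computation as in Section II.1 of \cite{FTN} gives $P(w,\cdot)=q^{\mp 1}$ on the two halves and pins the $G_b$-fixed line down as the span of $\chi_{\partial\mathcal{T}_{\tilde a}}+q^s\chi_{\partial\mathcal{T}_{\tilde a'}}$. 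As $q^s\neq 1$ for such $s$, the images of $M_\pi^{G_a}$ and $M_\pi^{G_b}$ in $M_\pi^{G_e}$ are linearly independent and fill it, so $\iota$ is onto and $H^1(G,\pi)=0$.

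Finally I would treat the special case $\ell_\pi=2$, where $M_\pi^{G_a}=0$ and, by Proposition III.2.3 of \cite{FTN} (stated for one orbit on $V$), $\dim M_\pi^{G_e}=1$. Everything then hinges on $M_\pi^{G_b}$, and the decisive input is the dichotomy recorded after Theorem III.2.6 of \cite{FTN}: $\sigma^+$ is spherical for the Gelfand pair $(G,G_b)$ whereas $\sigma^-$ is not. Hence $M_{\sigma^+}^{G_b}\neq 0$, and being contained in the one-dimensional $M_{\sigma^+}^{G_e}$ it must equal it, so $\iota$ is onto and $H^1(G,\sigma^+)=0$. By contrast $M_{\sigma^-}^{G_b}=0$, so both summands of the source of $\iota$ vanish, $\iota$ is the zero map, and $H^1(G,\sigma^-)\cong M_{\sigma^-}^{G_e}$ is one-dimensional. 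The main obstacle is not any single computation but assembling the correct inputs from \cite{FTN} in the special case — the value $\dim M_\pi^{G_e}=1$ and the Gelfand-pair characterization separating $\sigma^+$ from $\sigma^-$ — since these are precisely what force the whole of $H^1$ to be carried by $\sigma^-$ alone.
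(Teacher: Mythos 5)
Your proposal is correct and follows essentially the same route as the paper: pass to the barycentric subdivision, apply Theorem \ref{main} to the amalgam $G=G_a*_{G_e}G_b$ with compact (hence Kazhdan) vertex groups, and compute the fixed-point spaces case by case via $\ell_\pi$, the boundary realization of nontrivial spherical representations, and the Gelfand-pair dichotomy from \cite{FTN} separating $\sigma^+$ from $\sigma^-$. The only differences are that you spell out details the paper leaves implicit (the Radon--Nikodym computation with the involution $w\in G_b$ and the linear-independence check $q^s\neq 1$), which is harmless.
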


\begin{proof} If $\ell_\pi>2$ or if $\pi$ is the trivial 1-dimensional representation, the proof is the same as for the corresponding cases in Theorem \ref{twoorbits}.

 If $\pi$ is non-trivial and $\ell_\pi=1$, the proof is analogous to the corresponding case in Theorem \ref{twoorbits}: using the realization of $\pi$ as a boundary representation, we have that $M_\pi^{G_a}$ is the 1-dimensional space of constant functions on $\partial\mathcal{T}$, that $M_\pi^{G_e}$ is the 2-dimensional space of functions constant on $\partial\mathcal{T}_a$ and $\partial\mathcal{T}_b$. The only change is that $M_\pi^{G_b}$ is now the 1-dimensional space of functions $\xi$ constant on $\partial\mathcal{T}_a$ and $\partial\mathcal{T}_b$, such that 
 $$\xi|_{\partial\mathcal{T}_b}=q^s\xi|_{\partial\mathcal{T}_a}.$$
 So $\iota:M_\pi^{G_a}\oplus M_\pi^{G_b}\rightarrow M_\pi^{G_e}$ is onto, and the result follows from Theorem \ref{main}.
 
 For $\sigma^+$, we have $M_{\sigma^+}^{G_a}=\{0\}$ (since $\ell_{\sigma^+}=2$), and $M_{\sigma^+}^{G_b}$ is 1-dimensional (since $\sigma^+$ is spherical for $(G,G_b)$), and $M_{\sigma^+}^{G_e}$ is 1-dimensional (by Proposition III.2.3 in \cite{FTN}); so $\iota$ is onto and $H^1(G,\sigma^+)=0$.
 
 Finally, for $\sigma^-$ we have $M_{\sigma^-}^{G_a}=\{0\}$ (since $\ell_{\sigma^-}=2$) and $M_{\sigma^-}^{G_b}=\{0\}$ (as $\sigma^-$ is not spherical for $(G,G_b)$); so $H^1(G,M_{\sigma^-})\simeq M_{\sigma^-}^{G_e}$ by Theorem 1. But $M_{\sigma^-}^{G_e}$ is 1-dimensional, by Proposition III.2.3 in \cite{FTN}.

\end{proof}

\bibliography{bib}
\bibliographystyle{alpha}

\noindent
Authors' address:
\medskip

\noindent
Institut de Math\'ematiques\hfill University of North Carolina at Greensboro\\
Rue Emile Argand 11 \hfill 317 College Avenue\\
CH-2000 Neuch\^atel \hfill Greensboro, NC 27412\\
SWITZERLAND \hfill USA\\
alain.valette@unine.ch \hfill t\_fernos@uncg.edu

\end{document}